\newcommand{\norm}[1]{\left\lVert#1\right\rVert}
\newcommand{\adef}{\begin{defin}}
\newcommand{\zdef}{\end{defin}}
\newtheorem{defin}{Definition}
\newtheorem{theorem}{Theorem}[section]
\newtheorem{lemma}[theorem]{Lemma}
\newtheorem{proposition}[theorem]{Proposition}
\newtheorem{corollary}[theorem]{Corollary}
\numberwithin{equation}{section}
\theoremstyle{definition}
\numberwithin{equation}{section}
\newcommand{\vertiii}[1]{{\left\vert\kern-0.25ex\left\vert\kern-0.25ex\left\vert #1
    \right\vert\kern-0.25ex\right\vert\kern-0.25ex\right\vert}}
\newcommand{\bigslant}[2]{{\raisebox{.2em}{$#1$}\left/\raisebox{-.2em}{$#2$}\right.}}
\def\block(#1,#2)#3{\multicolumn{#2}{c}{\multirow{#1}{*}{$ #3 $}}}
\DeclarePairedDelimiterX{\inp}[2]{\langle}{\rangle}{#1, #2}
\DeclareMathOperator{\Real}{\operatorname{Re}}
\newcommand{\abs}[1]{\lvert#1\rvert}
\theoremstyle{definition}
\newtheorem{definition}[theorem]{Definition}
\theoremstyle{remark}
\numberwithin{equation}{section}
\author{Willian Corrêa}
\subjclass[2020]{46B70, 46B80}
\thanks{The author was supported by São Paulo Research Foundation (FAPESP), grants 2016/25574-8 and 2021/13401-0.}
\begin{document}

\title[Uniform homeomorphisms induced by interpolation methods]{Uniform homeomorphisms between spheres induced by interpolation methods}

\begin{abstract}
    M. Daher \cite{Daher} showed that if $(X_0, X_1)$ is a regular couple of uniformly convex spaces then the unit spheres of the complex interpolation spaces $X_{\theta}$ and $X_{\eta}$ are uniformly homeomorphic for every $0 < \theta, \eta < 1$. We show that this is a rather general phenomenon of interpolation methods described by the discrete framework of interpolation of \cite{DiscreteFramework}.
\end{abstract}

\maketitle

\section{Introduction}

M. Daher showed in \cite{Daher} that in many natural situations complex interpolation generates uniform homeomorphisms (this result was obtained by Kalton independently \cite[page 216]{GeometricNonlinearBook}). More precisely:

\begin{theorem}[M. Daher]
    Let $(X_0, X_1)$ be a regular compatible couple of uniformly convex Banach spaces. Then for any $\theta, \eta \in (0, 1)$ the spheres of the complex interpolation spaces $X_{\theta}$ and $X_{\eta}$ are uniformly homeomorphic.
\end{theorem}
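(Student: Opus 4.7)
The plan is to construct the uniform homeomorphism explicitly from extremal functions in the Calderón complex method. Let $\mathscr{F} = \mathscr{F}(X_0, X_1)$ be the Calderón space of bounded continuous $(X_0 + X_1)$-valued functions on the closed strip $\overline{S} = \{z \in \mathbb{C} : 0 \le \operatorname{Re} z \le 1\}$, analytic in the interior and with $f(j+it) \in X_j$ on the two boundary lines, equipped with the norm
\[
\|f\|_{\mathscr{F}} = \max\Bigl\{\sup_{t \in \mathbb{R}} \|f(it)\|_{X_0},\ \sup_{t \in \mathbb{R}} \|f(1+it)\|_{X_1}\Bigr\}.
\]
Regularity guarantees that $\|x\|_{X_\theta} = \inf\{\|f\|_{\mathscr{F}} : f(\theta) = x\}$ is attained. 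For $x$ in the unit sphere $S_{X_\theta}$ I would pick an extremal $f_x$ and define $\phi_{\theta,\eta}(x) := f_x(\eta)$. Hadamard's three-lines inequality $\|f_x(\eta)\|_{X_\eta} \le \|f_x\|_{\mathscr{F}} = 1$ shows $\phi_{\theta,\eta}$ maps $S_{X_\theta}$ into the closed unit ball of $X_\eta$, and the symmetric $\phi_{\eta,\theta}$ will provide a two-sided inverse, forcing the image to land on $S_{X_\eta}$.

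The next step is to show that $f_x$ is unique, which is where uniform convexity enters. If $f, g$ are two extremals for the same $x$, then $(f+g)/2$ is also extremal, and a standard fact in the Calderón method (using reflexivity, inherited from uniform convexity) is that extremal functions have constant boundary modulus: $\|f(j+it)\|_{X_j} = \|x\|_{X_\theta}$ for almost every $t \in \mathbb{R}$ and $j \in \{0,1\}$. Combined with $\|(f+g)(j+it)/2\|_{X_j} \le 1$, the a.e.\ equality in the triangle inequality forces, by strict convexity of $X_j$, that $f(j+it) = g(j+it)$ a.e.; the Poisson integral representation then gives $f = g$ on all of $\overline{S}$. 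Quantifying this argument yields uniform continuity: for $x_1, x_2 \in S_{X_\theta}$ with extremals $f_1, f_2$, admissibility of $(f_1+f_2)/2$ for $(x_1+x_2)/2$ combined with uniform convexity of $X_\theta$ (inherited from $X_0, X_1$) gives $\|(f_1+f_2)/2\|_{\mathscr{F}}$ close to $1$ when $\|x_1 - x_2\|_{X_\theta}$ is small. The pointwise modulus of uniform convexity of $X_j$ then forces $\|f_1 - f_2\|_{X_j}$ small on the boundary on a set of large Poisson measure at $\theta$. Transferring this estimate through the Poisson kernel at $\eta$ bounds $\|\phi_{\theta,\eta}(x_1) - \phi_{\theta,\eta}(x_2)\|_{X_\eta}$ by some modulus $\omega(\|x_1 - x_2\|_{X_\theta})$ with $\omega(\varepsilon) \to 0$.

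The chief technical obstacle is that $\mathscr{F}$ is not itself uniformly convex (its norm is a sup over two boundaries, which destroys strict convexity), so one cannot directly conclude $\|f_1 - f_2\|_{\mathscr{F}}$ is small from $\|(f_1 + f_2)/2\|_{\mathscr{F}}$ being close to $1$. The Poisson integral representation is what rescues the argument: although $f_1, f_2$ may differ substantially on small boundary sets, the values at any interior point $\eta$ are Poisson averages of the boundary values, and the Poisson kernels at $\theta$ and $\eta$ are mutually absolutely continuous with bounded Radon--Nikodym derivatives depending only on $\theta, \eta \in (0,1)$. Making the resulting modulus $\omega$ uniform in $x_1, x_2 \in S_{X_\theta}$ is the real work; once it is, the general principle is clear and, as the rest of the paper will show, not specific to the continuous complex interpolation method but applicable in the discrete framework of \cite{DiscreteFramework}.
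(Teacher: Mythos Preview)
Your outline follows Daher's original argument and is workable, but it takes a genuinely different route from the paper. The main divergence is the choice of function space. You work in the classical Calder\'on space $\mathscr{F}$ with the sup norm over the two boundary lines, which, as you correctly note, is \emph{not} uniformly convex; you then plan to compensate via Poisson-kernel estimates to transfer boundary control from $\theta$ to $\eta$. The paper instead replaces $\mathscr{F}$ by a periodic function space $\mathcal{H}^2_\pi(\mathbb{S},\mathcal{X}_0,\mathcal{X}_1)$ whose norm is an $\ell_2$-combination of sequence-structure norms (for the complex method, $\mathfrak{S}_j=\hat{L}^{p_j}(\mathbb{T},X_j)$ with $1<p_j<\infty$). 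That space \emph{is} uniformly convex whenever $X_0,X_1$ are, and is isometric to $\mathfrak{S}_0(e^{-z_0})\cap_2\mathfrak{S}_1(e^{1-z_0})$ independently of $z_0$. Uniform continuity of $x\mapsto\Gamma_{z_0}(x)$ then follows in two lines from the modulus of convexity of the function space itself (Theorem~\ref{thm:optimal_function_is_uniform}), with no Poisson-kernel detour. What your route buys is fidelity to Daher's original proof; what the paper's route buys is a single function space not depending on the evaluation point, so the modulus of continuity of $U_{z,w}$ is automatically uniform over all $z,w\in\mathbb{S}$, and the whole argument transports verbatim to any interpolation method fitting the discrete framework.

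One step in your sketch is underjustified. You write that ``the symmetric $\phi_{\eta,\theta}$ will provide a two-sided inverse, forcing the image to land on $S_{X_\eta}$,'' but this is circular as stated: to know that $\phi_{\eta,\theta}\circ\phi_{\theta,\eta}=\mathrm{id}$ you must already know that $f_x$ is the extremal at $\eta$ for $f_x(\eta)$, which presupposes $\|f_x(\eta)\|_{X_\eta}=1$. The paper handles this point separately by duality (Theorem~\ref{thm:existence_of_map_spheres}): one constructs a dual function $g$ with $\|g(z)\|_{(\mathcal{X}_z^{(2)})^*}\le 1$ for all $z$ and $\langle g(z_0),x\rangle=1$; then $F(z)=\langle g(z),\Gamma_{z_0}(x)(z)\rangle$ is analytic, bounded by $1$, and equals $1$ at $z_0$, so the maximum modulus principle forces $F\equiv 1$ and hence $\|\Gamma_{z_0}(x)(z)\|_{z,2}=1$ for every $z$. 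In your setting the analogous fact is true (Daher proves it), but it does require an argument beyond the three-lines inequality.
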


Daher's Theorem is not exclusive to the complex method. The reiteration theorem between the real and complex methods \cite[Theorem 4.7.2]{BerghLofstrom} shows that the interior of real interpolation scales are preserved by complex interpolation (at least up to equivalence of norms). Therefore, if one starts with a regular compatible couple $(X_0, X_1)$ of uniformly convex spaces and takes $0 < \theta_0 < \theta_1 < 1$ and $1 < p_0 < p_1 < \infty$, a uniform homeomorphism between the spheres of $X_{\theta_0, p_0}$ and $X_{\theta_1, p_1}$ may be found by considering complex interpolation between $X_{\theta_0^{\prime}, p_0^{\prime}}$ and $X_{\theta_1^{\prime}, p_1^{\prime}}$ with $0 < \theta_0^{\prime} < \theta_0 < \theta_1 < \theta_1^{\prime} < 1$ and $0 < p_0^{\prime} < p_0 < p_1 < p_1^{\prime} < \infty$. By \cite[Example 6.6]{DiscreteFramework}, we have similar results for the Rademacher, $\gamma$ and (sometimes) the $\alpha$ methods.

General frameworks of interpolation allow one to see different interpolation methods as particular realizations of the same phenomenon. That is useful to prove results available in one method to others described by the same framework. For example, using the Cwikel-Kalton-Milman-Rochberg method of pseudolattices \cite{CKMR}, Ivtsan showed in \cite{Ivtsan} that Stafney's Lemma holds in many interpolation methods.

Our goal is to show that one does not need to pass trough complex interpolation to obtain Daher's Theorem, but that it is a general feature embedded into the discrete framework of interpolation of Lindemulder and Lorist presented in \cite{DiscreteFramework}. So in principle one does not need a reiteration theorem with the complex method to have uniform homeomorphisms between spheres of interpolation spaces. We use a vector valued version of the James space to give a possible example where reiteration with the complex method fails.

The classical example is the couple $(L_{p_0}, L_{p_1})$ where $1 < p_0 \neq p_1 < \infty$, for which the induced uniform homeomorphisms are the Mazur maps \cite{Mazur1929}. Extending a result of Odell and Schlumprecht \cite{OdellSchlumprecht}, Chaatit proved that the unit sphere of a Banach lattice $X$ with a weak unit which does not contain uniform copies of $\ell_{\infty}^n$ is uniformly homeomorphic to the unit sphere of a Hilbert space \cite{Chaatit}. Daher's Theorem may be used to give a proof of Chaatit's result: an extrapolation theorem of Pisier \cite{PisierApplications} (extended by Kalton in \cite{KaltonKothe}) ensures the existence of a regular compatible couple $(X_0, X_1)$ of uniformly convex spaces such that for some $\theta, \eta \in (0, 1)$ we have $X_{\theta} = X^{(2)}$ (the $2$-convexification of $X$) and $X_{\eta}$ is a Hilbert space. Daher's Theorem then gives a uniform homeomorphism between of the spheres of $X^{(2)}$ and $X_{\eta}$, and all that is left to do is showing that the spheres of $X$ and $X^{(2)}$ are uniformly homeomorphic. See an exposition of these results in \cite[Chapter 9]{GeometricNonlinearBook}. See also \cite{HolderDaher} for a finite-dimensional quantitative version of Daher's Theorem with an application to the Approximation Near Neighbor search.

In light of the facts of the previous paragraph, an extension of Daher's Theorem to other interpolation methods might be useful in the uniform classification of spheres of Banach spaces. In particular, it would be interesting to obtain extrapolation results akin to Pisier's for other methods (or, for that matter, to extend the Kalton Calculus of \cite{KaltonKothe}).

Our proof highlights an often neglected feature of the complex method of interpolation, namely, that associated to a compatible couple $(X_0, X_1)$ one does not have simply an interpolation scale $(X_{\theta})_{0 < \theta < 1}$, but an interpolation family $(X_z)_{0 < \Real(z) < 1}$. Most of the time this is overlooked because $X_z = X_{\Real(z)}$ isometrically, but this need not be true for other methods. Our proof of Daher's Theorem is an evidence that the following comment from \cite{ComplexInterpolationFamilies} is not exclusive to the complex method:

\medskip
``\textit{The current theory of interpolation involves the intermediate spaces between \emph{two} given Banach spaces (the `boundary' spaces). It is our claim that the natural setting for the complex method of interpolation involves a \emph{family} of `boundary' Banach spaces distributed on the boundary, $\partial D$, of a domain in $\mathbb{C}$.}''
\medskip

Daher's Theorem shows that under suitable conditions interpolation changes the spheres of Banach spaces inside the same uniformly homeomorphic class. A related result is that it does so continuously: we show that if the parameters are close to each other then the corresponding interpolation spaces are close with respect to the Kadets metric, generalizing a result of \cite{DistancesBanach}.

It is worth noting that the discrete framework for interpolation of \cite{DiscreteFramework} is at the same time more general than the Cwikel-Kalton-Milman-Rochberg framework of \cite{CKMR} (it does not necessarily define interpolation functors) and more restrictive (all the sequence structures admit differentiation, in the language of \cite{CKMR}). It is clear therefore that our results may be adapted to the framework of \cite{CKMR}.

\section{The discrete framework for interpolation}

For background on interpolation spaces we refer the reader to \cite{BerghLofstrom}. The authors of \cite{DiscreteFramework} provided a general framework that encompasses many interpolation methods. We describe it now with some adaptations: first, instead of restricting our interpolation parameter $\theta$ to $(0, 1)$ we allow it to be any complex number with real part in $(0, 1)$. Second, instead of taking a maximum for the norm of intersection spaces we take an $\ell_2$-norm. It is clear that this last change will give the same interpolation spaces up to equivalence of norms, which in turn will not impact the existence of uniform homeomorphisms with respect to the original norms.

Let $X$ be a complex Banach space, and let $\ell^0(\mathbb{Z}; X)$ be the space of $X$-valued sequences. A \textit{sequence structure} on $X$ is Banach space $\mathfrak{S}$ contained in $\ell^0(\mathbb{Z}; X)$ which is translation invariant and for which we have norm $1$ inclusions
\[
\ell^1(\mathbb{Z}; X) \subset \mathfrak{S} \subset \ell^{\infty}(\mathbb{Z}; X)
\]

\noindent The couple $\mathcal{X} = [X, \mathfrak{S}]$ is called a \textit{sequentially structured Banach space}. If $a \in \mathbb{C}^*$ we let $\mathfrak{S}(a)$ be the space of all $\vec{x} = (x_k) \in \ell^0(\mathbb{Z}, X)$ such that
\[
\|\vec{x}\|_{\mathfrak{S}(a)} = \|(a^k x_k)\|_{\mathfrak{S}} < \infty
\]

Suppose we have a compatible couple $(X_0, X_1)$ of Banach spaces such that each $X_j$ is a sequentially structure Banach space, i.e., we have sequence structures $\mathcal{X}_j = [X_j, \mathfrak{S}_j]$. The couple $(\mathcal{X}_0, \mathcal{X}_1)$ is called a \textit{compatible couple of sequentially structured Banach spaces}.

If $(X_0, X_1)$ is a compatible couple we denote by $X_0 \cap_2 X_1$ the space $X_0 \cap X_1$ with the equivalent norm $\|x\|_{X_0 \cap_2 X_1} = (\|x\|_{X_0}^2 + \|x\|_{X_1}^2)^{\frac{1}{2}}$.

Let $\mathbb{S} = \{z \in \mathbb{C} : 0 < \Real(z) < 1\}$ and let $z \in \mathbb{S}$. For $x \in X_0 + X_1$ we let
\[
\|x\|_{z,2} = \inf \|\vec{x}\|_{\mathfrak{S}_0(e^{-z}) \cap_2 \mathfrak{S}_1(e^{1-z})} = \inf \; (\|\vec{x}\|_{\mathfrak{S}_0(e^{-z})}^2 + \|\vec{x}\|_{\mathfrak{S}_1(e^{1-z})}^2)^{\frac{1}{2}}
\]
where the infimum is over all sequences $\vec{x} = (x_k) \in \mathfrak{S}_0(e^{-z}) \cap \mathfrak{S}_1(e^{1-z})$ such that $x = \sum\limits_{k \in \mathbb{Z}} x_k$ in $X_0 + X_1$. Define
\[
(\mathcal{X}_0, \mathcal{X}_1)_z^{(2)} = \mathcal{X}_z^{(2)} = \{x \in X_0 + X_1 : \|x\|_{z,2} < \infty\}
\]
This definition recovers the spaces $(\mathcal{X}_0, \mathcal{X}_1)_{\theta}$ of \cite{DiscreteFramework} for $0 < \theta < 1$ (with an equivalent norm). One may check that the spaces $(\mathcal{X}_0, \mathcal{X}_1)_z^{(2)}$ satisfy an interpolation estimate like the one of \cite[Theorem 5.2]{DiscreteFramework}, substituting $e^{\theta}$ by $e^{\Real(z)}$.

\section{Optimal representations}

One important step in the proof of Daher's Theorem for the complex method is the following result on optimal representations \cite[Proposition 3]{Daher}:

\begin{theorem}\label{thm:optimal_representations_Daher}
Let $(X_0, X_1)$ be a regular compatible couple (i.e., $X_0 \cap X_1$ is dense in $X_0$ and $X_1$) of reflexive Banach spaces. Let $\theta \in (0, 1)$.
\begin{enumerate}
    \item If $x \in S_{X_{\theta}}$ then there is $g$ in the Calderón space $\mathcal{H}^{\infty}(X_0, X_1)$ such that $g(\theta) = x$ and $\|g(j + it)\|_{X_j} = 1$ for almost every $t$, $j = 0, 1$. In particular, $\|g\| = \|x\|_{\theta}$.
    \item If $X_0$ is strictly convex then $g$ is unique with the property that $g(\theta) = x$ and $\|g\| = \|x\|_{\theta}$.
\end{enumerate}
\end{theorem}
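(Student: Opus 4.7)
The plan is to prove (1) by a weak-$*$ compactness argument applied to a minimizing sequence and to deduce (2) from an averaging argument that exploits strict convexity. For (1), I would first use regularity to approximate $x\in S_{X_\theta}$ by elements of $X_0\cap X_1$ (on which the Calderón function class $\mathcal{F}(X_0,X_1)$ is directly defined) and take a minimizing sequence $g_n\in \mathcal{F}(X_0,X_1)$ with $g_n(\theta)=x_n\to x$ and $\|g_n\|_{\mathcal F}\to 1=\|x\|_\theta$. Since $X_0,X_1$ are reflexive, their duals have the Radon-Nikodym property, so one has the Bochner duality $L^\infty(\mathbb{R};X_j)\cong L^1(\mathbb{R};X_j^*)^*$. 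The boundary traces $t\mapsto g_n(j+it)$ are bounded in $L^\infty(\mathbb{R};X_j)$, so after passing to a subsequence they converge weak-$*$ to some $h_j\in L^\infty(\mathbb{R};X_j)$ with $\|h_j\|_\infty\le 1$. Defining $g(z)$ as the Poisson integral of $(h_0,h_1)$ over the boundary of the strip, $g$ is bounded and analytic on $\mathbb{S}$, the weak-$*$ convergence gives $g_n(z)\to g(z)$ weakly in $X_0+X_1$ at every interior point, hence $g(\theta)=x$. Weak-$*$ lower semicontinuity yields $\|g\|_{\mathcal{H}^\infty}\le 1$, and the inequality $\|x\|_\theta\le \|g\|_{\mathcal{H}^\infty}$ forces equality.

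To upgrade the pointwise inequality $\|g(j+it)\|_{X_j}\le 1$ to equality almost everywhere, I would invoke the Jensen-Poisson inequality
\[
\log\|g(\theta)\|_{X_\theta}\le \int_{\mathbb{R}}\log\|g(it)\|_{X_0}\,P_0(\theta,t)\,dt+\int_{\mathbb{R}}\log\|g(1+it)\|_{X_1}\,P_1(\theta,t)\,dt,
\]
where $P_0,P_1$ are the Poisson kernels for the strip. This refinement of the three-lines theorem comes from the subharmonicity of $\log\|g(z)\|$, after multiplying $g$ by a Gaussian regularizer $e^{\delta(z-\theta)^2}$ to secure decay at infinity and then letting $\delta\to 0^+$. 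Since the left-hand side vanishes while both integrands are $\le 0$ a.e., the strict positivity of the Poisson kernels forces the integrands to vanish almost everywhere, which is exactly $\|g(j+it)\|_{X_j}=1$ a.e.

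For (2), suppose $g_1,g_2$ both satisfy the conclusion of (1) for the same $x$. Their average $h=(g_1+g_2)/2$ still satisfies $h(\theta)=x$ and $\|h\|_{\mathcal{H}^\infty}\le 1$, so it is another optimal representation. By (1), $\|h(it)\|_{X_0}=1$ a.e., and combining with $\|g_j(it)\|_{X_0}=1$ a.e. we obtain equality
\[
1=\|h(it)\|_{X_0}\le \tfrac{1}{2}\bigl(\|g_1(it)\|_{X_0}+\|g_2(it)\|_{X_0}\bigr)=1
\]
in the triangle inequality for unit vectors of $X_0$. Strict convexity of $X_0$ then forces $g_1(it)=g_2(it)$ for almost every $t$, and the boundary uniqueness for bounded analytic functions on the strip (via the same Poisson representation, for instance) extends this equality to all of $\mathbb{S}$.

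The main technical obstacle is making the compactness-plus-Poisson-reconstruction step rigorous: weak-$*$ limits of boundary data lie only in $L^\infty$, which is not directly integrable against the strip Poisson kernels. The standard remedy is to regularize by the factor $e^{\delta(z-\theta)^2}$, perform the weak-$*$ passage to the limit with $\delta>0$ fixed so that the boundary data now decay at infinity, and then let $\delta\to 0^+$. The same regularization is what legitimizes the Jensen-Poisson inequality for the limit $g\in\mathcal{H}^\infty$, so these two points are really the technical heart of the argument.
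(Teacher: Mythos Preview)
The paper does not prove this theorem: it is quoted verbatim as \cite[Proposition 3]{Daher} and used as background motivation for the analogous statement in the discrete framework (Theorems \ref{thm:first_optimal} and \ref{thm:Daher1}). So there is no ``paper's own proof'' to compare against; your proposal is a sketch of the original Daher argument.

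Your outline is essentially correct and follows the standard route. Two small points are worth tightening. First, in (2) you write ``By (1), $\|h(it)\|_{X_0}=1$ a.e.''; part (1) as stated is an existence statement, but what you actually need (and what your Jensen--Poisson argument delivers) is that \emph{every} function $g$ with $g(\theta)=x$ and $\|g\|_{\mathcal H^\infty}=1$ satisfies $\|g(j+it)\|_{X_j}=1$ a.e. Make that explicit. Second, you only verify $g_1(it)=g_2(it)$ a.e.\ on the left boundary and then appeal to ``boundary uniqueness''; this is fine, but the reason is that a bounded analytic function on the strip whose left boundary trace vanishes a.e.\ must vanish identically (the Jensen--Poisson inequality applied to $g_1-g_2$ gives $\log|g_1(z)-g_2(z)|=-\infty$ at every interior point). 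It would be cleaner to note that the same strict-convexity argument on the $X_1$ side is unavailable, so this analytic step is genuinely needed.
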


Let $(\mathcal{X}_0, \mathcal{X}_1)$ be a compatible couple of sequentially structure Banach spaces, $z \in \mathbb{S}$, and consider the map
\[
\Sigma : \mathfrak{S}_0(e^{-z}) \cap_2 \mathfrak{S}_1(e^{1 - z}) \rightarrow X_0 + X_1
\]
given by $\Sigma(\vec{x}) = \sum_{k \in \mathbb{Z}} x_k$. Following \cite[Remark 3.2]{DiscreteFramework}, it is possible to prove that $\Sigma$ is well-defined and bounded. It follows directly from the definitions that we have an induced isometry
\[
\tilde{\Sigma} : \bigslant{\mathfrak{S}_0(e^{-z}) \cap_2 \mathfrak{S}_1(e^{1 - z})}{\ker \; \Sigma} \rightarrow (\mathcal{X}_0, \mathcal{X}_1)_z^{(2)}
\]

\begin{definition}
A sequentially structure Banach space $\mathcal{X} = [X, \mathfrak{S}]$ will be called \emph{reflexive/strictly convex/uniformly convex} if so is $\mathfrak{S}$.
\end{definition}

Notice that if $\mathcal{X} = [X, \mathfrak{S}]$ is a sequentially structured Banach space then $X$ is a subspace of $\mathfrak{S}$, and therefore if $\mathcal{X}$ is reflexive/strictly convex/uniformly convex then so is $X$. We at once get the following result:

\begin{theorem}\label{thm:first_optimal}
Let $(\mathcal{X}_0, \mathcal{X}_1)$ be a compatible couple of sequentially structured Banach spaces and let $z \in \mathbb{S}$.
\begin{enumerate}
    \item If $\mathcal{X}_0$ and $\mathcal{X}_1$ are reflexive then given $x \in S_{\mathcal{X}_z^{(2)}}$ there is $\vec{x} \in \mathfrak{S}_0(e^{-z}) \cap_2 \mathfrak{S}_1(e^{1 - z})$ of norm $1$ such that $\Sigma(\vec{x}) = x$.
    \item If $\mathcal{X}_0$ and $\mathcal{X}_1$ are strictly convex then the previous element $\vec{x}$ is unique.
\end{enumerate}
\end{theorem}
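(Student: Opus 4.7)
The plan is to use the induced isometry $\tilde\Sigma$ to transfer the statement to the quotient $E/\ker\Sigma$, where $E = \mathfrak{S}_0(e^{-z}) \cap_2 \mathfrak{S}_1(e^{1-z})$, and then apply standard Banach-space facts about norm-attainment in quotients of reflexive spaces and its uniqueness when the ambient space is strictly convex.

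The first step is to verify that $E$ is reflexive (respectively strictly convex) whenever $\mathfrak{S}_0$ and $\mathfrak{S}_1$ are. For any $a \in \mathbb{C}^*$, the weighting map $M_a : \mathfrak{S}(a) \to \mathfrak{S}$, $(x_k) \mapsto (a^k x_k)$, is an isometric isomorphism by the very definition of $\|\cdot\|_{\mathfrak{S}(a)}$, so each $\mathfrak{S}_j(e^{j-z})$ ($j=0,1$) is isometric to $\mathfrak{S}_j$ and inherits the relevant geometric property from $\mathcal{X}_j$. The space $E$ embeds isometrically into $\mathfrak{S}_0(e^{-z}) \oplus_2 \mathfrak{S}_1(e^{1-z})$ as the closed subspace $\{(\vec{x},\vec{x}) : \vec{x} \in E\}$, so $E$ inherits reflexivity and strict convexity from the $\ell_2$-sum, which in turn inherits them from the two factors.

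For (1), given $x \in S_{\mathcal{X}_z^{(2)}}$, fix a preimage $\vec{x}_0 \in \Sigma^{-1}(x)$; since $\tilde\Sigma$ is an isometry we have $\|[\vec{x}_0]\|_{E/\ker\Sigma} = 1$. Choose $\vec{y}_n \in \vec{x}_0 + \ker\Sigma$ with $\|\vec{y}_n\|_E \to 1$. By reflexivity of $E$, some subsequence $\vec{y}_{n_k}$ converges weakly to some $\vec{y} \in E$. Since $\ker\Sigma$ is a closed convex subspace it is weakly closed, so $\vec{y} - \vec{y}_{n_1} \in \ker\Sigma$ and hence $\Sigma(\vec{y}) = x$. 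Weak lower semicontinuity of the norm gives $\|\vec{y}\|_E \leq 1$, while $\|\vec{y}\|_E \geq \|[\vec{y}]\|_{E/\ker\Sigma} = 1$, yielding the desired norm-one representation. For (2), if $\vec{y}^1, \vec{y}^2$ are both norm-one representations of $x$, then $(\vec{y}^1 + \vec{y}^2)/2 \in \vec{x}_0 + \ker\Sigma$ has norm at most $1$ by convexity and at least $\|[\vec{x}_0]\| = 1$ via $\tilde\Sigma$, giving equality; strict convexity of $E$ then forces $\vec{y}^1 = \vec{y}^2$.

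There is no serious obstacle here; the argument is a routine quotient-of-reflexive / strict-convexity manipulation once the isometry $\tilde\Sigma$ is in place. The only point that requires a short verification, rather than a truly difficult step, is that the complex weighting $M_a$ is an isometric isomorphism of the $\mathbb{C}$-Banach spaces involved, so that reflexivity and strict convexity are indeed transferred from $\mathfrak{S}_j$ to $\mathfrak{S}_j(e^{j-z})$.
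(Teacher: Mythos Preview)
Your proposal is correct and is precisely the standard argument the paper is tacitly invoking: in the paper the theorem is stated immediately after the induced isometry $\tilde\Sigma$ with the phrase ``We at once get the following result,'' and no further proof is given. Your write-up simply spells out the routine details (transfer of reflexivity/strict convexity to $E$ via the weighting isometry and the $\ell_2$-sum, norm-attainment in a coset of a closed subspace of a reflexive space, and uniqueness from strict convexity), so the approaches coincide.
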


To continue Daher uses properties of the function $g$ of Theorem \ref{thm:optimal_representations_Daher}. We shall therefore use the complex description of the discrete framework of interpolation. If $X$ is a Banach space let $\mathcal{H}(\mathbb{S}, X)$ be the space of analytic $X$-valued functions on $\mathbb{S}$. We will say that $f \in \mathcal{H}(\mathbb{S}, X)$ is $2\pi$-periodic if $f(z + 2\pi i) = f(z)$ for every $z \in \mathbb{S}$ and let $f_z(t) = f(z + it)$ be defined for $t \in \mathbb{R}$..

Let us consider the space $\mathcal{H}_{\pi}(\mathbb{S}, X)$ of $2\pi$-periodic functions in $\mathcal{H}(\mathbb{S}, X)$. For those functions it makes sense to take Fourier coefficients:
\[
\hat{f_z}(k) = \frac{1}{2\pi} \int_0^{2\pi} f(z + it) e^{-ikt} dt
\]
for $k \in \mathbb{Z}$. According to \cite[Lemma 4.1]{DiscreteFramework}, the sequence $(e^{-ks} \hat{f_s}(k))_{k \in \mathbb{Z}}$ is independent of $s \in (0, 1)$. Similarly, we have:

\begin{lemma}
The sequence $(e^{-kz} \hat{f}_z (k))_{k \in \mathbb{Z}}$ is independent of $z \in \mathbb{S}$.
\end{lemma}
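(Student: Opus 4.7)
The plan is to show, for each fixed $k \in \mathbb{Z}$, that the map $z \mapsto e^{-kz}\hat{f}_z(k)$ is constant on $\mathbb{S}$. I would realize this quantity as a contour integral of an analytic, $2\pi i$-periodic function, and then use Cauchy's theorem on a parallelogram to equate its values over two different vertical segments. This is essentially the same scheme as in the proof of \cite[Lemma 4.1]{DiscreteFramework}, promoted from $s \in (0,1)$ to a general $z \in \mathbb{S}$.

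Set $g(w) = f(w) e^{-kw}$. Since $f \in \mathcal{H}(\mathbb{S}, X)$ and $e^{-kw}$ is entire, $g \in \mathcal{H}(\mathbb{S}, X)$. The crucial observation is that $g$ is $2\pi i$-periodic:
\[
g(w + 2\pi i) = f(w + 2\pi i)\, e^{-kw}\, e^{-2\pi i k} = f(w) e^{-kw} = g(w),
\]
using $2\pi$-periodicity of $f$ and $e^{-2\pi i k} = 1$ for $k \in \mathbb{Z}$. Parametrizing the vertical segment from $z$ to $z + 2\pi i$ by $w = z + it$, $t \in [0, 2\pi]$, one obtains
\[
e^{-kz}\hat{f}_z(k) \;=\; \frac{1}{2\pi}\int_0^{2\pi} f(z + it)\, e^{-k(z+it)}\, dt \;=\; \frac{1}{2\pi i}\int_{z}^{z+2\pi i} g(w)\, dw.
\]

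Now fix $z_0, z_1 \in \mathbb{S}$. The parallelogram $P$ with vertices $z_0, z_1, z_1 + 2\pi i, z_0 + 2\pi i$ is contained in $\mathbb{S}$ by convexity of the strip. Applying Cauchy's theorem to the analytic function $g$ on $P$ gives $\oint_{\partial P} g(w)\, dw = 0$. The horizontal sides of $P$ (from $z_0$ to $z_1$, and from $z_1 + 2\pi i$ back to $z_0 + 2\pi i$) cancel by $2\pi i$-periodicity of $g$, so
\[
\int_{z_0}^{z_0 + 2\pi i} g(w)\, dw \;=\; \int_{z_1}^{z_1 + 2\pi i} g(w)\, dw,
\]
which rearranges to $e^{-kz_0}\hat{f}_{z_0}(k) = e^{-kz_1}\hat{f}_{z_1}(k)$, as required.

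The only potentially delicate point is using Cauchy's theorem for an $X$-valued analytic function, but this is standard: apply an arbitrary $x^* \in X^*$ and invoke the scalar Cauchy theorem, then conclude by Hahn--Banach (since $g$ is continuous on the compact contour $\partial P \subset \mathbb{S}$, the Bochner integrals are well-defined and commute with bounded linear functionals). No real obstacle beyond that; the rest is the periodicity bookkeeping.
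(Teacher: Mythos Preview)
Your argument is correct and is precisely the extension the paper has in mind: the paper omits the proof entirely, simply pointing to \cite[Lemma~4.1]{DiscreteFramework} and stating that the result for $z \in \mathbb{S}$ follows ``similarly''. Your contour argument via the $2\pi i$-periodic function $g(w)=f(w)e^{-kw}$ and Cauchy's theorem on the parallelogram with vertices $z_0, z_1, z_1+2\pi i, z_0+2\pi i$ is exactly this similar proof written out in full, and the Hahn--Banach reduction for the vector-valued Cauchy theorem is the right way to handle the one mild technicality.
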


This means that the following definition is independent of $z_0 \in \mathbb{S}$. Let $(\mathcal{X}_0, \mathcal{X}_1)$ be a compatible couple of sequentially structured Banach spaces and define $\mathcal{H}^2_{\pi}(\mathbb{S}, \mathcal{X}_0, \mathcal{X}_1)$ as the space of all $f \in \mathcal{H}_{\pi}(\mathbb{S}, \mathcal{X}_0 + \mathcal{X}_1)$ such that
\[
\|f\|_{\mathcal{H}^2_{\pi}} \coloneqq (\|\hat{f}_{z_0}\|_{\mathfrak{S}_0(e^{-z_0})}^2 + \|\hat{f}_{z_0}\|_{\mathfrak{S}_1(e^{1-z_0})}^2)^{\frac{1}{2}} < \infty
\]

As in \cite[Lemma 4.2]{DiscreteFramework}, the map $f \mapsto \hat{f}_{z_0}$ is an isometric isomorphism from $\mathcal{H}^2_{\pi}(\mathbb{S}, \mathcal{X}_0, \mathcal{X}_1)$ onto $\mathfrak{S}_0(e^{-z_0}) \cap_2 \mathfrak{S}_1(e^{1 - z_0})$. Its inverse is given by
\[
\vec{x} \mapsto f(z) = \sum\limits_{k \in \mathbb{Z}} e^{k(z - z_0)} x_k
\]
It follows at once that
\begin{equation}\label{eq:isometry_quotient_functions}
    \|x\|_{z_0, 2} = \inf \{\|f\|_{\mathcal{H}^2_{\pi}} : f \in \mathcal{H}^2_{\pi}(\mathbb{S}, \mathcal{X}_0, \mathcal{X}_1), f(z_0) = x\}
\end{equation}

Let $\delta_{z_0} : \mathcal{H}^2_{\pi}(\mathbb{S}, \mathcal{X}_0, \mathcal{X}_1) \rightarrow X_0 + X_1$ be given by $\delta_{z_0}(f) = f(z_0)$. 

\begin{lemma}\label{lem:isometry_quotient_functions}
$\delta_{z_0}$ is bounded and $\mathcal{X}_{z_0}^{(2)} = \bigslant{\mathcal{H}^2_{\pi}(\mathbb{S}, \mathcal{X}_0, \mathcal{X}_1)}{\ker \; \delta_{z_0}}$ isometrically.
\end{lemma}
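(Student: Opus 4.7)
The plan is to factor $\delta_{z_0}$ through the Fourier isomorphism that was already established and then invoke equation (\ref{eq:isometry_quotient_functions}) to get the isometry on the quotient. Write $F : \mathcal{H}^2_{\pi}(\mathbb{S}, \mathcal{X}_0, \mathcal{X}_1) \to \mathfrak{S}_0(e^{-z_0}) \cap_2 \mathfrak{S}_1(e^{1-z_0})$ for the isometric isomorphism $f \mapsto \hat{f}_{z_0}$. Using the inverse formula $f(z) = \sum_{k \in \mathbb{Z}} e^{k(z - z_0)} \hat{f}_{z_0}(k)$ and evaluating at $z = z_0$, one obtains
\[
\delta_{z_0}(f) = f(z_0) = \sum_{k \in \mathbb{Z}} \hat{f}_{z_0}(k) = \Sigma(\hat{f}_{z_0}) = (\Sigma \circ F)(f),
\]
so $\delta_{z_0} = \Sigma \circ F$. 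The series on the right converges in $X_0 + X_1$ because $\Sigma$ is well-defined and bounded on $\mathfrak{S}_0(e^{-z_0}) \cap_2 \mathfrak{S}_1(e^{1-z_0})$, as was noted right after the definition of $\Sigma$.

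Boundedness of $\delta_{z_0}$ is then immediate from this factorization, since $F$ is an isometry and $\Sigma$ is bounded. For the isometric identification of the quotient I would argue in either of two equivalent ways. The quickest is to use (\ref{eq:isometry_quotient_functions}) directly: the quotient norm of $[f] \in \mathcal{H}^2_{\pi}/\ker \delta_{z_0}$ equals $\inf\{\|g\|_{\mathcal{H}^2_{\pi}} : g(z_0) = f(z_0)\} = \|f(z_0)\|_{z_0, 2}$, and the image of $\delta_{z_0}$ equals the image of $\Sigma$, which is $\mathcal{X}_{z_0}^{(2)}$ by definition. Alternatively, since $F$ is an isometry one has $\ker \delta_{z_0} = F^{-1}(\ker \Sigma)$, yielding an induced isometric isomorphism between $\mathcal{H}^2_{\pi}/\ker \delta_{z_0}$ and $(\mathfrak{S}_0(e^{-z_0}) \cap_2 \mathfrak{S}_1(e^{1-z_0}))/\ker \Sigma$; composing with $\tilde{\Sigma}$, which is already known to be isometric, produces the desired identification with $\mathcal{X}_{z_0}^{(2)}$.

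There is no real obstacle: once the factorization $\delta_{z_0} = \Sigma \circ F$ is written down, the lemma follows formally from facts already proved in the excerpt. The only small subtlety is to ensure that the value of $f$ at $z_0$ coincides with the sum $\sum_{k} \hat{f}_{z_0}(k)$ in $X_0 + X_1$, but this is exactly the content of the inverse formula describing $F^{-1}$.
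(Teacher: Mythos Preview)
Your proof is correct and is essentially the same as the paper's: the paper draws the commutative diagram expressing $\delta_{z_0} = \Sigma \circ F$ (with $F$ the Fourier isometry) to get boundedness, and then invokes (\ref{eq:isometry_quotient_functions}) for the isometric quotient identification, exactly as you do. Your write-up is slightly more explicit in verifying the factorization via the inverse formula, but the argument is the same.
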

\begin{proof}
    The first part follows from the commutative diagram
    \[
        \xymatrix{\mathcal{H}_{\pi}^2(\mathbb{S}, \mathcal{X}_0, \mathcal{X}_1) \ar[r]\ar[d]^{\delta_{z_0}} & \mathfrak{S}_0(e^{-z_0}) \cap_2 \mathbb{S}_1(e^{1 - z_0}) \ar[d]^{\Sigma} \\
        X_0 + X_1 \ar@{=}[r] & X_0 + X_1}
    \]
    where the horizontal arrow is the isometry described above. The second part of the result is simply \ref{eq:isometry_quotient_functions}.
\end{proof}

We get at once:
\begin{theorem}\label{thm:Daher1}
Let $(\mathcal{X}_0, \mathcal{X}_1)$ be a compatible couple of sequentially structured Banach spaces and $z_0 \in \mathbb{S}$.
\begin{enumerate}
    \item If $\mathcal{X}_0$ and $\mathcal{X}_1$ are reflexive, then given $x \in S_{\mathcal{X}_{z_0}^{(2)}}$ there is $f \in \mathcal{H}^2_{\pi}(\mathbb{S}, \mathcal{X}_0, \mathcal{X}_1)$ of norm $1$ such that $f(z_0) = x$.
    \item If $\mathcal{X}_0$ and $\mathcal{X}_1$ are strictly convex then the previous element $f$ is unique, and we denote it by $\Gamma_{z_0}(x)$ and call it \emph{the optimal function associated to $x$}.
\end{enumerate}
\end{theorem}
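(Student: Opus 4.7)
The plan is to transport Theorem \ref{thm:first_optimal} across the isometric isomorphism $\Phi : \mathcal{H}^2_{\pi}(\mathbb{S}, \mathcal{X}_0, \mathcal{X}_1) \to \mathfrak{S}_0(e^{-z_0}) \cap_2 \mathfrak{S}_1(e^{1-z_0})$ given by $\Phi(f) = \hat{f}_{z_0}$, using the commutative diagram of Lemma \ref{lem:isometry_quotient_functions}. That diagram says $\delta_{z_0} = \Sigma \circ \Phi$, so a norm-one preimage of $x$ under $\delta_{z_0}$ corresponds, via $\Phi$, to a norm-one preimage of $x$ under $\Sigma$, and conversely.

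For part (1), assume $\mathcal{X}_0$ and $\mathcal{X}_1$ are reflexive, and let $x \in S_{\mathcal{X}_{z_0}^{(2)}}$. Theorem \ref{thm:first_optimal}(1) produces $\vec{x} \in \mathfrak{S}_0(e^{-z_0}) \cap_2 \mathfrak{S}_1(e^{1-z_0})$ with $\|\vec{x}\| = 1$ and $\Sigma(\vec{x}) = x$. Set $f = \Phi^{-1}(\vec{x})$, which by the explicit formula given just before Lemma \ref{lem:isometry_quotient_functions} is
\[
f(z) = \sum_{k \in \mathbb{Z}} e^{k(z - z_0)} x_k.
\]
Since $\Phi$ is an isometry, $\|f\|_{\mathcal{H}^2_\pi} = 1$, and from the commutative diagram $f(z_0) = \delta_{z_0}(f) = \Sigma(\Phi(f)) = \Sigma(\vec{x}) = x$.

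For part (2), suppose additionally that $\mathcal{X}_0$ and $\mathcal{X}_1$ are strictly convex, and let $f, g \in \mathcal{H}^2_\pi(\mathbb{S}, \mathcal{X}_0, \mathcal{X}_1)$ both have norm $1$ with $f(z_0) = g(z_0) = x$. Applying $\Phi$ gives $\vec{x} := \Phi(f)$ and $\vec{y} := \Phi(g)$ of norm $1$ in $\mathfrak{S}_0(e^{-z_0}) \cap_2 \mathfrak{S}_1(e^{1-z_0})$, and the commutative diagram yields $\Sigma(\vec{x}) = f(z_0) = x = g(z_0) = \Sigma(\vec{y})$. By the uniqueness statement of Theorem \ref{thm:first_optimal}(2), $\vec{x} = \vec{y}$, whence $f = g$ since $\Phi$ is a bijection.

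No real obstacle is expected: the work has already been done in setting up the isometric identification $\Phi$ together with the commutative diagram of Lemma \ref{lem:isometry_quotient_functions}, so the statement is essentially a translation of Theorem \ref{thm:first_optimal} from the sequence side to the analytic-function side. The only subtlety worth flagging is that one must check the equivalence of the normalizations on the two sides, which is immediate since $\Phi$ is an isometry, and that existence and uniqueness of a norm-one preimage of $x$ under $\delta_{z_0}$ is equivalent to the corresponding property for $\Sigma$, which follows from $\delta_{z_0} = \Sigma \circ \Phi$.
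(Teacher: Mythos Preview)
Your proposal is correct and follows exactly the approach the paper intends: the paper states the theorem with the preamble ``We get at once,'' meaning it is an immediate translation of Theorem~\ref{thm:first_optimal} via the isometry $\Phi$ and the commutative diagram of Lemma~\ref{lem:isometry_quotient_functions}. You have simply spelled out the details of that translation, and nothing further is needed.
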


Our goal is to show that, for any $z \in \mathbb{S}$, $x \mapsto \Gamma_{z_0}(x)(z)$ is a uniform homeomorphism between the unit spheres of $\mathcal{X}_{z_0}^{(2)}$ and $\mathcal{X}_{z}^{(2)}$. For that we need to show that $\|\Gamma_{z_0}(x)(z)\|_{\mathcal{X}_{z}^{(2)}} = 1$, what will be done via duality.

\section{Duality}

In \cite{DiscreteFramework} the dual of the interpolation spaces is described up to equivalence of norms. We will need an isometric description. For that we will have consider more properties of sequence structures. A sequentially structured Banach space $\mathcal{X} = [X, \mathfrak{S}]$ is called \emph{reflection invariant} if for every $(x_k)_{k \in \mathbb{Z}} \in \mathfrak{S}$ we have $\|(x_k)_{k \in \mathbb{Z}}\|_{\mathfrak{S}} = \|(x_{-k})_{k \in \mathbb{Z}}\|_{\mathfrak{S}}$. If $\lim_{n \rightarrow \infty} C_n \vec{x} = \vec{x}$ for every $\vec{x} \in \mathfrak{S}$, where $C_n$ is the Cesàro operator
\[
C_n \vec{x} = \frac{1}{n+1} \sum\limits_{m=0}^n (\cdots, 0, x_{-m}, \cdots, x_m, 0, \cdots)
\]
and $\sup_n \|C_n \vec{x}\|_{\mathfrak{S}} \leq \|\vec{x}\|_{\mathfrak{S}}$ then $\mathcal{X}$ is called a \emph{$c_0$-sequentially structured Banach space}. If $\mathcal{X}$ is a $c_0$-sequentially structured Banach space and $a \in \mathbb{C}$ then $\mathcal{X}(a)^* = \mathcal{X}^*(a^{-1})$ isometrically (see the commentary after \cite[Lemma 3.13]{DiscreteFramework}).

In this section we will fix a regular couple $(\mathcal{X}_0, \mathcal{X}_1)$ of $c_0$-sequentially structured Banach spaces which are reflection invariant. Recall that we have
\[
\mathcal{X}_z^{(2)} = \bigslant{\mathfrak{S}_0(e^{-z}) \cap_2 \mathfrak{S}_1(e^{1 - z})}{\ker \; \Sigma}
\]
Notice that $\mathfrak{S}_0(e^{-z}) \cap_2 \mathfrak{S}_1(e^{1 - z})$ is a closed subspace of $\mathfrak{S}_0(e^{-z}) \oplus_2 \mathfrak{S}_1(e^{1 - z})$. Under this identification,
\[
\ker \; \Sigma = \{(\vec{x}, \vec{x}) \in \mathfrak{S}_0(e^{-z}) \oplus_2 \mathfrak{S}_1(e^{1-z}) : \Sigma \vec{x} = 0\}
\]
and from the continuity of $\Sigma$ on $\mathfrak{S}_0(e^{-z}) \cap_2 \mathfrak{S}_1(e^{1 - z})$ we have that $\ker \Sigma$ is closed in $\mathfrak{S}_0(e^{-z}) \oplus_2 \mathfrak{S}_1(e^{1 - z})$. It follows that
\[
(\mathcal{X}_z^{(2)})^* = \bigslant{(\ker \; \Sigma)^{\perp}}{(\mathfrak{S}_0(e^{-z}) \cap_2 \mathfrak{S}_1(e^{1 - z}))^{\perp}}
\]
isometrically, where the annihilators are taken inside $(\mathfrak{S}_0(e^{-z}) \oplus_2 \mathfrak{S}_1(e^{1 - z}))^*$. We have:
\[
(\mathfrak{S}_0(e^{-z}) \oplus_2 \mathfrak{S}_1(e^{1 - z}))^* = \mathfrak{S}_0^*(e^{z}) \oplus_2 \mathfrak{S}_1^*(e^{z - 1})
\]
and by reflection invariance
\[
\mathfrak{S}_0^*(e^{z}) \oplus_2 \mathfrak{S}_1^*(e^{z - 1}) = \mathfrak{S}_0^*(e^{-z}) \oplus_2 \mathfrak{S}_1^*(e^{1 - z})
\]
The sum operator $\Sigma$ does not care about reflection, therefore $(\ker \; \Sigma)^{\perp}$ is still the same.

\begin{lemma}
$(\ker \; \Sigma)^{\perp} = \{(\vec{x^*}, \vec{y^*}) : \exists \; x^* \in X_0^* + X_1^* : x_k^* + y_k^* = x^* \; \forall k \}$.
\end{lemma}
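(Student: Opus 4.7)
The plan is to translate the annihilator condition through the identification of the dual of $\mathfrak{S}_0(e^{-z}) \oplus_2 \mathfrak{S}_1(e^{1-z})$ set up in the preceding paragraphs, and then test the resulting pointwise condition against a sufficiently rich family of diagonal elements of $\ker \; \Sigma$. Viewing $\mathfrak{S}_0(e^{-z}) \cap_2 \mathfrak{S}_1(e^{1-z})$ as diagonally embedded in $\mathfrak{S}_0(e^{-z}) \oplus_2 \mathfrak{S}_1(e^{1-z})$, the duality pairing of $(\vec{x^*}, \vec{y^*}) \in \mathfrak{S}_0^*(e^{-z}) \oplus_2 \mathfrak{S}_1^*(e^{1-z})$ with a diagonal element $(\vec{x}, \vec{x})$ takes the form $\langle \vec{x^*}, \vec{x}\rangle_{\mathfrak{S}_0} + \langle \vec{y^*}, \vec{x}\rangle_{\mathfrak{S}_1}$, which on finitely supported $\vec{x}$ with values in $X_0 \cap X_1$ collapses to $\sum_k \langle x_k^* + y_k^*, x_k\rangle$ (the reflection absorbed into the identification being harmless because $\ker\;\Sigma$ is preserved under reflection of indices).

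For the inclusion $(\ker \; \Sigma)^{\perp} \subseteq \{\cdots\}$, I would test the annihilator condition against the two-point sequences $\vec{x}$ with $x_i = u$ and $x_j = -u$, for $u \in X_0 \cap X_1$ and $i \neq j$; these lie in $\mathfrak{S}_0(e^{-z}) \cap_2 \mathfrak{S}_1(e^{1-z})$ through the norm-one inclusion $\ell^1(\mathbb{Z}; X_j) \subset \mathfrak{S}_j$ and trivially satisfy $\Sigma \vec{x} = 0$. Vanishing of the pairing on each such sequence yields $\langle x_i^* + y_i^* - x_j^* - y_j^*, u\rangle = 0$ for every $u \in X_0 \cap X_1$, and the canonical identification $X_0^* + X_1^* = (X_0 \cap X_1)^*$ then forces $x_i^* + y_i^* = x_j^* + y_j^*$ in $X_0^* + X_1^*$; setting $x^* := x_0^* + y_0^*$ provides the required common value.

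For the reverse inclusion, assume $x_k^* + y_k^* = x^*$ for every $k$. On a finitely supported $\vec{x}$ in $X_0 \cap X_1$ with $\Sigma \vec{x} = 0$, the pairing collapses to $\sum_k \langle x^*, x_k\rangle = \langle x^*, \Sigma \vec{x}\rangle = 0$, since $\Sigma \vec{x}$ is a finite sum of elements of $X_0 \cap X_1$ on which $x^*$ is naturally defined. To extend this to a general $\vec{x} \in \ker \; \Sigma$ I would employ the $c_0$-sequentially structured hypothesis: the Cesàro means $C_n \vec{x}$ are finitely supported with values in $X_0 \cap X_1$ and converge to $\vec{x}$ in the intersection norm, so I would correct $C_n \vec{x}$ by subtracting off $\Sigma C_n \vec{x} \in X_0 \cap X_1$ distributed across one or more indices so as to land back in $\ker \; \Sigma$ while keeping the correction small.

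The main obstacle is this last convergence step: a naïve single-index correction has size controlled by $\|\Sigma C_n \vec{x}\|_{X_0 \cap X_1}$, whereas $\Sigma$ is only continuous into $X_0 + X_1$. Overcoming this will require spreading the correction across indices where the exponential weights $e^{-kz}$ and $e^{k(1-z)}$ are respectively small, trading off the $X_0$- and $X_1$-parts of a decomposition $\Sigma C_n \vec{x} = a_n + b_n$ separately; regularity enters here as well, allowing one to approximate such decompositions within $X_0 \cap X_1$.
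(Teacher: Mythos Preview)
Your argument for the inclusion $(\ker\Sigma)^{\perp}\subseteq\{(\vec{x^*},\vec{y^*}):x_k^*+y_k^*\equiv x^*\}$ is exactly the paper's: test against the two-term zero-sum sequences $u\,e_i-u\,e_j$ with $u\in X_0\cap X_1$ and use regularity of the couple to conclude $x_i^*+y_i^*=x_j^*+y_j^*$ in $X_0^*+X_1^*$. The paper takes $i=0$, you allow arbitrary $i,j$; this is immaterial.

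For the reverse inclusion the paper's entire proof is the sentence ``The other inclusion is clear.'' You do substantially more: you check the annihilation on finitely supported zero-sum sequences and then try to extend to a general $\vec{x}\in\ker\Sigma$ by Ces\`aro approximation plus a correction landing back in $\ker\Sigma$. The obstacle you isolate is genuine --- equivalently, what is needed is that finitely supported zero-sum sequences with entries in $X_0\cap X_1$ are dense in $\ker\Sigma$, and this does not fall out of the $c_0$-axiom alone, because $\Sigma C_n\vec{x}$ is small only in $X_0+X_1$. Your proposed fix (split $\Sigma C_n\vec{x}=a_n+b_n$ and place each piece where one exponential weight is small) still leaves the \emph{other} weighted norm of that piece uncontrolled, since you have no bound on $\|a_n'\|_{X_1}$ or $\|b_n'\|_{X_0}$; so the sketch, as written, does not close. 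One way to finish is to exploit the geometric decay $\|x_k\|_{X_0}\lesssim e^{k\Real z}$, $\|x_k\|_{X_1}\lesssim e^{-k(1-\Real z)}$ coming from $\mathfrak{S}_j\hookrightarrow\ell^\infty$: the one-sided partial sums $\sum_{j\le k}x_j$ then lie in $X_0\cap X_1$ with the same geometric control, and a telescoping correction (in the spirit of the operator $A$ used later in Lemma~\ref{lem:def_A}) produces the required finitely supported zero-sum approximants.

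In short: on the direction the paper actually argues, you coincide with it; on the direction the paper waves away, you are more careful than the paper and correctly locate a nontrivial point, though your outlined resolution needs one more idea to go through.
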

\begin{proof}
Let $(\vec{x^*}, \vec{y^*}) \in (\ker \; \Sigma)^{\perp}$, and let $x \in X_0 \cap X_1$. Let $j \in \mathbb{Z}^*$. Consider the sequence $\vec{z} = (z_k)_{k \in \mathbb{Z}}$ such that $z_0 = x$, $z_{-j} = -x$, and $z_k = 0$ otherwise. Then $\vec{z} \in \ker \; \Sigma$, and $0 = (\vec{x^*}, \vec{y^*})(\vec{z}) = (x^*_0 + y^*_0 - x^*_j - y^*_j)(x)$. Since $X_0 \cap X_1$ is dense in $X_0$ and $X_1$, it follows that $x^*_0 + y^*_0 = x^*_j + y^*_j$ for every $j$. The other inclusion is clear.
\end{proof}

\begin{lemma}
$(\mathfrak{S}_0(e^{-z}) \cap_2 \mathfrak{S}_1(e^{1 - z}))^{\perp} = \{(\vec{x^*}, -\vec{x^*}) : \vec{x^*} \in \mathfrak{S}_0(e^{-z})^* \cap \mathfrak{S}_1(e^{1 - z})^*\}$.
\end{lemma}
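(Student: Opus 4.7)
The plan is to argue by double inclusion, exploiting the diagonal embedding $\vec{x} \mapsto (\vec{x}, \vec{x})$ of $\mathfrak{S}_0(e^{-z}) \cap_2 \mathfrak{S}_1(e^{1-z})$ into $\mathfrak{S}_0(e^{-z}) \oplus_2 \mathfrak{S}_1(e^{1-z})$. A generic element of the dual of the direct sum has the form $(\vec{u^*}, \vec{v^*})$ with $\vec{u^*} \in \mathfrak{S}_0(e^{-z})^*$ and $\vec{v^*} \in \mathfrak{S}_1(e^{1-z})^*$, and its restriction to the diagonal is $\vec{x} \mapsto \vec{u^*}(\vec{x}) + \vec{v^*}(\vec{x})$. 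Both pairings reduce concretely to $\sum_k \langle u_k^*, x_k \rangle$ and $\sum_k \langle v_k^*, x_k \rangle$ (the weights being absorbed into the coordinatewise evaluations through the $c_0$-sequentially structured identification of duals), so annihilating the diagonal amounts to asking that $\sum_k \langle u_k^* + v_k^*, x_k \rangle = 0$ for every $\vec{x} \in \mathfrak{S}_0(e^{-z}) \cap \mathfrak{S}_1(e^{1-z})$.

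The inclusion $\supseteq$ is then an immediate check: for $\vec{x^*}$ in the intersection of the two weighted dual sequence spaces, the pair $(\vec{x^*}, -\vec{x^*})$ paired with a diagonal element $(\vec{x}, \vec{x})$ yields $\vec{x^*}(\vec{x}) - \vec{x^*}(\vec{x}) = 0$.

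For $\subseteq$, I would test the annihilating identity against the simplest possible diagonal elements, namely sequences supported at a single index $k$ with value $x \in X_0 \cap X_1$ (these lie in $\mathfrak{S}_0(e^{-z}) \cap \mathfrak{S}_1(e^{1-z})$ thanks to the norm-one embedding $\ell^1(\mathbb{Z}; X_j) \subset \mathfrak{S}_j$). This forces $\langle u_k^* + v_k^*, x \rangle = 0$ for every $x \in X_0 \cap X_1$ and every $k$. Because the couple is regular, $X_0 \cap X_1$ is dense in both $X_0$ and $X_1$, so $-v_k^*$ is the unique bounded extension of $u_k^*|_{X_0 \cap X_1}$ to $X_1$ and, symmetrically, $u_k^*$ is the unique extension of $-v_k^*|_{X_0 \cap X_1}$ to $X_0$. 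Consequently $u_k^*$ and $-v_k^*$ glue into a single functional $x_k^* \in X_0^* \cap X_1^*$ with $v_k^* = -u_k^* = -x_k^*$.

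All that remains is the bookkeeping step of observing that the resulting sequence $\vec{x^*} = \vec{u^*}$ sits in both duals: it lies in $\mathfrak{S}_0(e^{-z})^*$ by the construction of $\vec{u^*}$, and $-\vec{x^*} = \vec{v^*} \in \mathfrak{S}_1(e^{1-z})^*$ gives the second membership. The main obstacle is precisely this last identification: one has to check that the pairing formula $\sum_k \langle x_k^*, x_k \rangle$ genuinely computes both dualities simultaneously, which is where the $c_0$-sequentially structured hypothesis and reflection invariance intervene to let ``the same'' $\vec{x^*}$ be regarded as a bounded functional on both weighted sequence spaces.
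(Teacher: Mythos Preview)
Your approach is essentially the same as the paper's: for the nontrivial inclusion $\subseteq$, test the annihilator against sequences supported at a single coordinate with value in $X_0 \cap X_1$, then use regularity of the couple to conclude that each $u_k^*$ agrees with $-v_k^*$ on the dense subspace and hence coincides with it in $X_0^* \cap X_1^*$. The paper does exactly this (writing the test sequence with a reflected index $-j$ because of the reflection-invariance identification of the dual made just before the lemma), and also declares the reverse inclusion ``clear.''

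One remark: you overstate the difficulty of the final ``bookkeeping'' step. Once you have shown $u_k^* = -v_k^*$ coordinatewise, the memberships $\vec{x^*} \in \mathfrak{S}_0(e^{-z})^*$ and $-\vec{x^*} \in \mathfrak{S}_1(e^{1-z})^*$ are nothing more than the original hypotheses on $\vec{u^*}$ and $\vec{v^*}$; there is no further check to perform. The $c_0$-structure and reflection invariance are used earlier, to write elements of these duals concretely as $X_j^*$-valued sequences with the coordinatewise pairing --- they do not play any additional role at the end.
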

\begin{proof}
Let $(\vec{x^*}, \vec{y^*}) \in (\mathfrak{S}_0(e^{-z}) \cap_2 \mathfrak{S}_1(e^{1 - z}))^{\perp}$. Given any $x \in X_0 \cap X_1$ and $j \in \mathbb{Z}$, let $\vec{z}$ be the sequence such that $z_{-j} = x$, and $z_k = 0$ otherwise. Then $0 = (\vec{x^*}, \vec{y^*})(\vec{z}) = (x^*_j + y^*_j)(x)$. It follows that $\vec{x^*} = -\vec{y^*}$. The other inclusion is clear. 
\end{proof}

The previous results motivate the following definition, which already appears for $z \in (0, 1)$ with an equivalent norm in \cite[Section 3.3]{DiscreteFramework}:

\begin{definition}
For $z \in \mathbb{S}$ let
\[
\mathcal{X}_z^{(2), m} = \{x \in X_0 + X_1 : \|(\cdots, x, x, x, \cdots)\|_{\mathfrak{S}_0(e^{-z}) +_2 \mathfrak{S}_1(e^{1 - z})} < \infty\}
\]
\end{definition}

\begin{theorem}
Let $(\mathcal{X}_0, \mathcal{X}_1)$ be a regular couple of reflection invariant $c_0$-sequentially structured Banach spaces. Then $(\mathcal{X}_z^{(2)})^* = (\mathcal{X}^*)_z^{(2), m}$ isometrically.
\end{theorem}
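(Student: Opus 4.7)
\noindent\emph{Proof proposal.}
My plan is to combine the two preceding lemmas with the isometric quotient description $(\mathcal{X}_z^{(2)})^* = (\ker\Sigma)^\perp/(\mathfrak{S}_0(e^{-z}) \cap_2 \mathfrak{S}_1(e^{1-z}))^\perp$, with annihilators taken in $\mathfrak{S}_0^*(e^{-z}) \oplus_2 \mathfrak{S}_1^*(e^{1-z})$, which was already assembled in the paragraph preceding the two lemmas via $c_0$-duality, reflection invariance, and the fact that $\ker\Sigma$ is reflection-invariant. From this starting point the theorem reduces to a direct reading off of the two lemmas.

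First I would parametrize the quotient by $x^*$. By the first lemma, a representative of a class is a pair $(\vec{x^*}, \vec{y^*})$ with a common constant sum $x_k^* + y_k^* = x^*$ for some $x^* \in X_0^* + X_1^*$. By the second lemma, two representatives are equivalent iff their difference has the form $(\vec{w^*}, -\vec{w^*})$, which, combined with the constant-sum condition, is equivalent to them producing the same $x^*$ (and then $\vec{w^*}$ automatically lies in the intersection $\mathfrak{S}_0^*(e^{-z}) \cap \mathfrak{S}_1^*(e^{1-z})$ because $\vec{w^*}$ is the difference of the first coordinates of two elements of $(\ker\Sigma)^\perp$ and simultaneously the negative of the difference of the second coordinates). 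Hence the map $[(\vec{x^*}, \vec{y^*})] \mapsto x^*$ is a well-defined linear bijection from $(\mathcal{X}_z^{(2)})^*$ onto the set of those $x^* \in X_0^* + X_1^*$ for which the constant sequence $(\ldots, x^*, x^*, \ldots)$ decomposes as $\vec{x^*} + \vec{y^*}$ with $\vec{x^*} \in \mathfrak{S}_0^*(e^{-z})$ and $\vec{y^*} \in \mathfrak{S}_1^*(e^{1-z})$; this is exactly the condition defining $(\mathcal{X}^*)_z^{(2), m}$.

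The norm computation is then immediate: the quotient norm of the class corresponding to $x^*$ is the infimum of $\sqrt{\|\vec{x^*}\|_{\mathfrak{S}_0^*(e^{-z})}^2 + \|\vec{y^*}\|_{\mathfrak{S}_1^*(e^{1-z})}^2}$ over all representatives, which under the above parametrization is the infimum over all decompositions of the constant sequence, i.e.\ exactly $\|(\ldots, x^*, x^*, \ldots)\|_{\mathfrak{S}_0^*(e^{-z}) +_2 \mathfrak{S}_1^*(e^{1-z})} = \|x^*\|_{(\mathcal{X}^*)_z^{(2), m}}$.

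The main bookkeeping hurdle will be keeping the reflection identification straight: the naive $c_0$-duality yields weights $e^z$ and $e^{z-1}$, and only after invoking reflection invariance do the weights become $e^{-z}$ and $e^{1-z}$, matching those in the definition of $(\mathcal{X}^*)_z^{(2), m}$. Beyond this, every step is a purely algebraic manipulation of the descriptions in the two lemmas; no new analytic input is needed.
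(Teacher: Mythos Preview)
Your proposal is correct and follows essentially the same approach as the paper: both start from the isometric quotient description of $(\mathcal{X}_z^{(2)})^*$, apply the two lemmas to identify the annihilators, observe that two representatives are equivalent precisely when they share the same constant sum $x^*$, and read off the quotient norm as the $+_2$-norm of the constant sequence. Your write-up is in fact slightly more explicit than the paper's on the bijection and on the reflection bookkeeping, but the argument is the same.
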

\begin{proof}
We have seem that
\begin{eqnarray*}
(\mathcal{X}_z^{(2)})^* & = & \bigslant{(\ker \; \Sigma)^{\perp}}{(\mathfrak{S}_0(e^{-z}) \cap_2 \mathfrak{S}_1(e^{1 - z}))^{\perp}} \\
& = & \bigslant{\{(\vec{x^*}, \vec{y^*}) : \exists \; x^* \in X_0^* + X_1^* : x_k^* + y_k^* = x^* \; \forall k \}}{\{(\vec{x^*}, -\vec{x^*}) : \vec{x^*} \in \mathfrak{S}_0(e^{-z})^* \cap \mathfrak{S}_1(e^{1 - z})^*\}}
\end{eqnarray*}
inside $\mathfrak{S}_0^*(e^{-z}) \oplus_2 \mathfrak{S}_1^*(e^{1 - z})$ isometrically. Notice that $(\vec{x^*}, \vec{y^*})$ and $(\vec{r^*}, \vec{t^*})$ are equivalent if and only if there is $x^* \in X_0^* + Y_0^*$ such that $x^* = x_k^* + y_k^* = r_k^* + t_k^*$ for every $k$. Therefore
\[
\|x^*\|_{(\mathcal{X}_z^{(2)})^*} = \inf (\|\vec{x^*}\|_{\mathfrak{S}_0^*(e^{-z})}^2 + \|\vec{y^*}\|_{\mathfrak{S}_1^*(e^{1 - z})}^2)^{\frac{1}{2}}
\]
where the infimum is over all $\vec{x^*}, \vec{y^*}$ in the indicated spaces such that $x^*_k + y^*_k = x^*$ for every $k$. That is the norm of $x^*$ in $(\mathcal{X}^*)_z^{(2), m}$.
\end{proof}

We at once get existence of an optimal representation for elements of $(\mathcal{X}_z^{(2)})^*$:

\begin{lemma}
Let $(\mathcal{X}_0, \mathcal{X}_1)$ be a regular couple of reflexive reflection invariant $c_0$-sequentially structured Banach spaces and let $z \in \mathbb{S}$. Then every element $x$ of $(\mathcal{X}_z^{(2)})^*$ admits a representation $(\vec{x^*}, \vec{y^*})$ such that $x^* = x_k^* + y_k^*$ for every $k$ and
\[
\|x^*\|_{(\mathcal{X}^*)_z^{(2), m}} = (\|\vec{x^*}\|_{\mathfrak{S}_0^*(e^{-z})}^2 + \|\vec{y^*}\|_{\mathfrak{S}^*_1(e^{1 - z})}^2)^{\frac{1}{2}}
\]
\end{lemma}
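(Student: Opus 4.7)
The plan is to attain the infimum defining the $(\mathcal{X}^*)_z^{(2),m}$-norm by a standard weak compactness argument, using the reflexivity hypothesis. The first step is to unpack reflexivity at the level of sequence structures: $\mathcal{X}_0$ and $\mathcal{X}_1$ reflexive means (by the definition given in the paper) that $\mathfrak{S}_0$ and $\mathfrak{S}_1$ are reflexive, hence so are $\mathfrak{S}_0^*$ and $\mathfrak{S}_1^*$. Since the weighting $(x_k) \mapsto (e^{-kz} x_k)$ is an isometric isomorphism from $\mathfrak{S}_j^*$ onto $\mathfrak{S}_j^*(e^{-z})$ (and similarly for $e^{1-z}$), these weighted spaces remain reflexive. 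It follows that the ambient space
\[
E := \mathfrak{S}_0^*(e^{-z}) \oplus_2 \mathfrak{S}_1^*(e^{1 - z})
\]
is reflexive.

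The second step is to verify that the admissible set
\[
A = \{(\vec{x^*}, \vec{y^*}) \in E : x_k^* + y_k^* = x^* \text{ for all } k \in \mathbb{Z}\}
\]
is a closed affine subspace of $E$. This reduces to the continuity of each coordinate evaluation $\vec{x^*} \mapsto x_k^*$ from $\mathfrak{S}_j^*(e^{\pm z})$ into $X_j^*$, which is a consequence of the norm-one inclusion $\mathfrak{S}_j^* \hookrightarrow \ell^{\infty}(\mathbb{Z}; X_j^*)$ (available because $\mathcal{X}_j$ is $c_0$-sequentially structured, so the dual is itself a sequence structure) combined with the bounded weight. Being closed and convex, $A$ is weakly closed by Mazur's theorem; it is non-empty because $x^* \in (\mathcal{X}^*)_z^{(2),m}$.

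Once these preparations are in place the rest is routine. I would pick a sequence $(\vec{x^*_n}, \vec{y^*_n}) \in A$ with $\|(\vec{x^*_n}, \vec{y^*_n})\|_E \to \|x^*\|_{(\mathcal{X}^*)_z^{(2),m}}$, note it is bounded, and extract a weakly convergent subsequence with limit $(\vec{x^*}, \vec{y^*})$. By weak closedness this limit lies in $A$, and by weak lower semicontinuity of the norm on $E$ its norm is at most the infimum; the reverse inequality is trivial, so equality is attained. The only step that requires any care is the continuity of coordinate evaluations on the weighted dual sequence structures, but this is essentially bookkeeping in the duality identifications $\mathcal{X}(a)^* = \mathcal{X}^*(a^{-1})$ recorded just before the section; no genuinely hard estimate appears.
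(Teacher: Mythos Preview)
Your proof is correct and is precisely the argument the paper has in mind: the lemma is stated without proof, preceded only by ``We at once get existence of an optimal representation'', because the duality theorem exhibits $(\mathcal{X}_z^{(2)})^*$ as a quotient of the closed subspace $(\ker\Sigma)^{\perp}$ of the reflexive space $\mathfrak{S}_0^*(e^{-z}) \oplus_2 \mathfrak{S}_1^*(e^{1-z})$, and in a reflexive space every coset attains its infimum norm by exactly the weak compactness/lower semicontinuity argument you wrote out. Your explicit verification that the admissible set $A$ is weakly closed via continuity of coordinate evaluations is more detail than needed (it is simply a coset of the closed subspace $(\mathfrak{S}_0(e^{-z}) \cap_2 \mathfrak{S}_1(e^{1-z}))^{\perp}$), but it is not wrong.
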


\section{Daher's Theorem}

To prove Daher's Theorem, we will use optimal representations to associate to an element of the dual of $\mathcal{X}_z^{(2)}$ an analytic function. For the next two results we consider a regular couple $(\mathcal{X}_0, \mathcal{X}_1)$ of reflexive reflection invariant $c_0$-sequentially structured Banach spaces.

\begin{lemma}\label{lem:def_A}
Let $(\mathcal{X}_0, \mathcal{X}_1)$ be a couple of reflection invariant $c_0$-sequentially structured Banach spaces. Let $x^* \in (\mathcal{X}_z^{(2)})^*$ and take $\vec{x^*}, \vec{y^*}$ such that $x^* = x_k^* + y_k^*$ for every $k$. Define $Ax^*$ by $(Ax^*)_k = x_k^* - x_{k-1}^* = -(y_k^* - y_{k-1}^*) \in X_0^* \cap X_1^*$. Then $\Sigma Ax^* = x^*$ and $Ax^* \in \mathfrak{S}_0^*(e^{-z}) \cap \mathfrak{S}_1^*(e^{1 - z})$.
\end{lemma}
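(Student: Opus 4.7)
The plan is to verify the three assertions in the order they are stated. First I would record the basic identity $(Ax^*)_k = x^*_k - x^*_{k-1} = -(y^*_k - y^*_{k-1})$, which is immediate from the relations $x^* = x^*_k + y^*_k = x^*_{k-1} + y^*_{k-1}$. Since $x^*_k, x^*_{k-1} \in X_0^*$, the difference $x^*_k - x^*_{k-1}$ lies in $X_0^*$; dually, $-(y^*_k - y^*_{k-1}) \in X_1^*$, so the common value belongs to $X_0^* \cap X_1^*$.

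Next I would establish the sequence-level membership $Ax^* \in \mathfrak{S}_0^*(e^{-z}) \cap \mathfrak{S}_1^*(e^{1-z})$. Writing $T$ for the unit shift $(T\vec{u})_k = u_{k-1}$, we have $Ax^* = \vec{x^*} - T\vec{x^*}$. Translation invariance of $\mathfrak{S}_0$ transfers by duality to $\mathfrak{S}_0^*$ and hence to the weighted space $\mathfrak{S}_0^*(e^{-z})$, on which $T$ is a bounded operator. Since $\vec{x^*} \in \mathfrak{S}_0^*(e^{-z})$, the same is true of $Ax^*$. Rewriting the identity in the form $Ax^* = -(\vec{y^*} - T\vec{y^*})$ and applying the same argument to $\mathfrak{S}_1^*(e^{1-z})$ completes this step.

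The last step is $\Sigma Ax^* = x^*$. Continuity of $\Sigma$ on $\mathfrak{S}_0^*(e^{-z}) \cap_2 \mathfrak{S}_1^*(e^{1-z})$ lets me identify $\Sigma Ax^*$ with the limit of the partial sums, and a telescoping computation gives $\sum_{k=-N}^{M} (Ax^*)_k = x^*_M - x^*_{-N-1}$. From the norm one inclusion $\mathfrak{S}_0^* \subset \ell^{\infty}(\mathbb{Z}; X_0^*)$ one reads off the coordinatewise bound $\|x^*_k\|_{X_0^*} \leq e^{\Real(z) k} \|\vec{x^*}\|_{\mathfrak{S}_0^*(e^{-z})}$, so $x^*_{-N-1} \to 0$ in $X_0^*$ as $N \to \infty$ because $\Real(z) > 0$. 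Symmetrically, $\|y^*_k\|_{X_1^*} \leq e^{(\Real(z) - 1) k} \|\vec{y^*}\|_{\mathfrak{S}_1^*(e^{1-z})}$ forces $y^*_M \to 0$ in $X_1^*$ as $M \to \infty$, so $x^*_M = x^* - y^*_M \to x^*$ in $X_0^* + X_1^*$. The partial sums therefore converge to $x^*$, and this limit must agree with $\Sigma Ax^*$.

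I expect the third item to be the only delicate point: the sum $\sum_k (Ax^*)_k$ is not absolutely convergent in any obvious sense, so one cannot simply move $\Sigma$ past the sum. The telescoping structure is what rescues the argument, together with the decay of $x^*_k$ at $-\infty$ and of $y^*_k$ at $+\infty$, both of which are forced by the inclusion $\mathfrak{S}_j^* \subset \ell^{\infty}$ and the hypothesis $0 < \Real(z) < 1$.
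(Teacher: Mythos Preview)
Your proposal is correct. The paper itself gives no argument beyond referring to the real-parameter case in \cite[Theorem 3.12]{DiscreteFramework}; your shift identity $Ax^* = (I - T)\vec{x^*}$ for the membership claim and the telescoping-plus-coordinate-decay argument for $\Sigma Ax^* = x^*$ are precisely the standard verification one finds behind that reference, so there is nothing substantive to compare.
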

\begin{proof}
It is similar to the case $z \in (0, 1)$, which is in the proof of \cite[Theorem 3.12]{DiscreteFramework}.
\end{proof}

\begin{lemma}\label{lem:def_g}
Let $(\mathcal{X}_0, \mathcal{X}_1)$ be a regular couple of reflexive reflection invariant $c_0$-sequentially structured Banach spaces. Let $x^* \in (\mathcal{X}_{z_0}^{(2)})^*$ and take $\vec{x^*}, \vec{y^*}$ such that $x^* = x_k^* + y_k^*$ for every $k$ and
\[
\|x^*\|_{(\mathcal{X}^*)_z^{(2), m}} = (\|\vec{x^*}\|_{\mathfrak{S}_0^*(e^{-z})}^2 + \|\vec{y^*}\|_{\mathfrak{S}^*_1(e^{1 - z})}^2)^{\frac{1}{2}}
\]
Define $Ax^*$ as in Lemma \ref{lem:def_A} and
\[
g(z) = \sum\limits_{k \in \mathbb{Z}} e^{k(z - z_0)} (Ax^*)_k
\]
Then for every $z \in \mathbb{S}$ we have
\[
\|g(z)\|_{(\mathcal{X}_z^{(2)})^*} \leq \|x\|_{(\mathcal{X}_{z_0}^{(2)})^*}
\]
\end{lemma}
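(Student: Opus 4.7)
The strategy is to mimic Daher's complex-interpolation argument in the discrete setting: rather than looking at boundary values on vertical lines, I would construct a multiplicative representative of $g(z)$ at $z$ directly from the optimal multiplicative representation of $x^*$ at $z_0$. By the duality $(\mathcal{X}_z^{(2)})^* = (\mathcal{X}^*)_z^{(2), m}$ established in the previous section, bounding $\|g(z)\|_{(\mathcal{X}_z^{(2)})^*}$ reduces to producing a pair $(\vec{u^*}(z), \vec{v^*}(z))$ satisfying $u_k^*(z) + v_k^*(z) = g(z)$ for every $k$ and then controlling the $\ell^2$-sum of the weighted $\mathfrak{S}_j^*$-norms.

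The natural candidate is the partial-sum decomposition
\[
u_k^*(z) := \sum_{j \leq k} e^{j(z - z_0)} (Ax^*)_j, \qquad v_k^*(z) := \sum_{j > k} e^{j(z - z_0)} (Ax^*)_j,
\]
for which $u_k^*(z) + v_k^*(z) = g(z)$ is immediate from the very definition of $g$. Because $(Ax^*)_j = x_j^* - x_{j-1}^* = -(y_j^* - y_{j-1}^*)$ and the $c_0$-sequential structure forces $x_j^* \to 0$ as $j \to -\infty$ and $y_j^* \to 0$ as $j \to +\infty$ in the relevant weighted senses, specializing to $z = z_0$ and telescoping recovers $u_k^*(z_0) = x_k^*$ and $v_k^*(z_0) = y_k^*$. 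Thus the construction is genuinely an analytic (in $z$) extension of the optimal $z_0$-representation.

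Applying summation by parts rewrites $e^{-kz} u_k^*(z)$ as the image of the sequence $a_k := e^{-kz_0} x_k^*$ under the rational Fourier multiplier $\frac{I - e^{-z_0} S}{I - e^{-z} S}$, where $S$ is the right shift on $\mathfrak{S}_0^*$; similarly $e^{k(1-z)} v_k^*(z)$ arises from $b_k := e^{k(1-z_0)} y_k^*$ via $\frac{e^{1-z_0} I - S^{-1}}{e^{1-z} I - S^{-1}}$ on $\mathfrak{S}_1^*$. Both operators reduce to the identity at $z = z_0$, consistent with the fact that the construction fixes $(\vec{x^*}, \vec{y^*})$ there; convergence of the Neumann series defining them is guaranteed by $\Real(z) \in (0,1)$ together with translation invariance of $\mathfrak{S}_j^*$.

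The crux — and the step I expect to be the main obstacle — is the combined estimate
\[
\|\vec{u^*}(z)\|^2_{\mathfrak{S}_0^*(e^{-z})} + \|\vec{v^*}(z)\|^2_{\mathfrak{S}_1^*(e^{1-z})} \leq \|\vec{x^*}\|^2_{\mathfrak{S}_0^*(e^{-z_0})} + \|\vec{y^*}\|^2_{\mathfrak{S}_1^*(e^{1-z_0})} = \|x^*\|^2_{(\mathcal{X}_{z_0}^{(2)})^*}.
\]
Neither multiplier symbol is pointwise bounded by $1$ in the Fourier variable, so the inequality cannot be obtained term-by-term; it must exploit the fact that $(\vec{x^*}, \vec{y^*})$ is \emph{optimal} at $z_0$, which ties $\vec{a}$ and $\vec{b}$ together through a first-order (Euler-Lagrange) condition balancing the two weighted norms. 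I expect the clean way to package this is via a three-lines/analytic-continuation argument: view the whole construction as coming from evaluating at $z$ an analytic function on $\mathbb{S}$ valued in $\mathfrak{S}_0^*(e^{-\cdot}) \oplus_2 \mathfrak{S}_1^*(e^{1-\cdot})$, use reflection invariance and the $c_0$-structure to produce the matching dual function, and conclude using Lemma \ref{lem:isometry_quotient_functions} (applied to the dual couple) that the norm of the evaluation is controlled by the norm at $z_0$.
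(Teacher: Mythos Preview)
Your partial-sum decomposition $(u_k^*(z),v_k^*(z))$ is exactly the paper's $(a_k,b_k)$, so through that point the two arguments coincide. The divergence is at the estimate. The paper does \emph{not} invoke optimality of $(\vec{x^*},\vec{y^*})$ at $z_0$, nor any three-lines argument; it simply asserts that the telescoping $(Ax^*)_k=x_k^*-x_{k-1}^*=-(y_k^*-y_{k-1}^*)$ already produces the \emph{equalities}
\[
\|\vec{a}\|_{\mathfrak{S}_0^*(e^{-z})}=\|\vec{x^*}\|_{\mathfrak{S}_0^*(e^{-z_0})},\qquad
\|\vec{b}\|_{\mathfrak{S}_1^*(e^{1-z})}=\|\vec{y^*}\|_{\mathfrak{S}_1^*(e^{1-z_0})}
\]
for every $z\in\mathbb{S}$. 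Were that so, your ``crux'' would be vacuous and the lemma immediate, with no role for optimality at all.

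Your multiplier analysis is correct, however, and it is the paper's displayed computation that slips: its first step replaces $e^{-nz}a_n$ by $\sum_{k\le n}e^{-kz_0}(x_k^*-x_{k-1}^*)$, whereas in fact $e^{-nz}a_n=\sum_{k\le n}e^{(k-n)z}e^{-kz_0}(x_k^*-x_{k-1}^*)$, and the factors $e^{(k-n)z}$ do not cancel. Concretely, taking $\mathfrak{S}_0^*=\ell^2(\mathbb{Z})$ and $x_k^*=\delta_{k,0}$ gives $\|\vec{a}\|_{\ell^2(e^{-z})}^2=1+|1-e^{z-z_0}|^2\,e^{-2\Real z}/(1-e^{-2\Real z})>1=\|\vec{x^*}\|_{\ell^2(e^{-z_0})}^2$ whenever $z\neq z_0$, so the individual norm equalities the paper claims are false in general --- exactly your point that the symbols $(1-e^{-z_0}\zeta)/(1-e^{-z}\zeta)$ are not of modulus $\le 1$. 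That said, your own sketch does not close the gap either: you correctly isolate the obstruction and point toward an Euler--Lagrange/three-lines mechanism, but neither the first-order condition coming from optimality nor the analytic $\oplus_2$-valued function whose evaluation would control $\|g(z)\|$ is actually written down, so the combined inequality remains unproven. In short, the paper and your proposal agree on the decomposition and both stall at precisely the step you flagged; the paper's route is shorter but, as written, incorrect, while yours is more cautious but incomplete.
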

\begin{proof}
We have
\[
g(z) = \sum\limits_{k \in \mathbb{Z}} e^{k(z - z_0)} (Ax^*)_k = \sum\limits_{k \in \mathbb{Z}} e^{k(z - z_0)} (x_k^* - x_{k-1}^*) = - \sum\limits_{k \in \mathbb{Z}} e^{k(z - z_0)} (y_k^* - y_{k-1}^*)
\]

Let $a_n = \sum\limits_{k = -\infty}^n e^{k(z - z_0)} (x_k^* - x_{k-1}^*)$ and $b_n = \sum\limits_{k=n+1}^{\infty} e^{k(z - z_0)} (x_k^* - x_{k-1}^*)$. Then $a_n + b_n = g(z)$ for every $n$ and
\begin{eqnarray*}
\|\vec{a}\|_{\mathfrak{S}_0^*(e^{-z})} & = & \|(\sum\limits_{k=-\infty}^n e^{-kz_0} (x_k^* - x_{k-1}^*))_n\|_{\mathfrak{S}_0^*} \\
& = & \|(\sum\limits_{k=-\infty}^n (x_k^* - x_{k-1}^*))_n\|_{\mathfrak{S}_0^*(e^{-z_0})} \\
& = & \|(x_{n}^*)_n\|_{\mathfrak{S}_0^*(e^{-z_0})}
\end{eqnarray*}

Similarly, $\|\vec{b}\|_{\mathfrak{S}_1^*(e^{1-z})} = \|(y_n^*)_n\|_{\mathfrak{S}_1^*(e^{1-z_0})}$. Therefore $\|g(z)\|_{(\mathcal{X}_z^{(2)})^*} \leq \|x\|_{(\mathcal{X}_{z_0}^{(2)})^*}$.
\end{proof}

\begin{definition}
A regular couple of uniformly convex reflection invariant $c_0$-sequentially structured Banach spaces is called a \emph{Daher couple}.
\end{definition}

\begin{theorem}\label{thm:existence_of_map_spheres}
Let $(\mathcal{X}_0, \mathcal{X}_1)$ be a Daher couple and $z_0 \in \mathbb{S}$. Let $\Gamma_{z_0} : \mathcal{X}_{z_0}^{(2)} \rightarrow \mathcal{H}_{\pi}^2(\mathbb{S}, \mathcal{X}_0, \mathcal{X}_1)$ be the application that sends $x$ to its optimal function (see Theorem \ref{thm:Daher1}). Then for every $z \in \mathbb{S}$ we have $\|\Gamma_{z_0}(x)(z)\|_{z, 2} = \|x\|_{z_0, 2}$.
\end{theorem}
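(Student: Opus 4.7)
The inequality $\|\Gamma_{z_0}(x)(z)\|_{z,2}\le\|x\|_{z_0,2}$ is immediate from Lemma~\ref{lem:isometry_quotient_functions}: putting $f:=\Gamma_{z_0}(x)$, one has $f(z)=\Gamma_{z_0}(x)(z)$, hence $\|f(z)\|_{z,2}\le\|f\|_{\mathcal{H}^2_\pi}=\|x\|_{z_0,2}$. For the reverse inequality I plan to adapt Daher's duality argument to this framework. Normalize so that $\|x\|_{z_0,2}=1$ and, by Hahn--Banach, pick $x^*\in(\mathcal{X}_{z_0}^{(2)})^*$ of norm one with $\langle x,x^*\rangle=1$. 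Using the dual description of Section~4, take an optimal representation $(\vec{x^*},\vec{y^*})$ of $x^*$ at $z_0$ and form the associated function $g$ from Lemma~\ref{lem:def_g}. The coefficients $a_k:=(Ax^*)_k$ belong to $X_0^*\cap X_1^*$ by Lemma~\ref{lem:def_A}, the partial sums of $\sum_k a_k$ converge to $x^*$ in $(\mathcal{X}_{z_0}^{(2)})^*$ by the $c_0$ structure, and $\|g(w)\|_{(\mathcal{X}_w^{(2)})^*}\le 1$ for every $w\in\mathbb{S}$.

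Next, introduce the scalar function $h(w):=\langle f(w),g(w)\rangle$, with the pairing taken in the duality $\mathcal{X}_w^{(2)}\times(\mathcal{X}_w^{(2)})^*$. Since $a_k\in X_0^*\cap X_1^*\subset(X_0+X_1)^*$, each map $w\mapsto\langle f(w),a_k\rangle$ is analytic, and expanding $g(w)=\sum_k e^{k(w-z_0)}a_k$ gives the Fourier representation
\[
h(w)=\sum_k e^{k(w-z_0)}\langle f(w),a_k\rangle=\sum_{j,k}e^{(j+k)(w-z_0)}\langle x_j,a_k\rangle,
\]
where $x_j:=\hat f_{z_0}(j)\in X_0\cap X_1$ and the individual pairings are therefore unambiguous. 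This identifies $h$ as a bounded analytic $2\pi$-periodic function on $\mathbb{S}$ with $|h(w)|\le\|f(w)\|_{w,2}\|g(w)\|_{(\mathcal{X}_w^{(2)})^*}\le 1$. Setting $w=z_0$ and interchanging the summations (which is legitimate by the convergence of $\sum_k a_k$ to $x^*$ in the dual norm),
\[
h(z_0)=\sum_{j,k}\langle x_j,a_k\rangle=\Big\langle x,\sum_k a_k\Big\rangle=\langle x,x^*\rangle=1.
\]

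The $2\pi$-periodicity lets $h$ factor as $h(w)=\tilde h(e^{w-z_0})$ for a bounded analytic function $\tilde h$ on the bounded annulus $A=\{e^{-\Real(z_0)}<|\zeta|<e^{1-\Real(z_0)}\}$. Because $0<\Real(z_0)<1$, the point $\zeta=1$ lies in the interior of $A$, and there $\tilde h(1)=h(z_0)=1$; combined with $|\tilde h|\le 1$ throughout $A$, the maximum modulus principle forces $\tilde h\equiv 1$, hence $h\equiv 1$ on $\mathbb{S}$. Evaluating at $w=z$,
\[
1=|h(z)|\le\|f(z)\|_{z,2}\cdot\|g(z)\|_{(\mathcal{X}_z^{(2)})^*}\le\|f(z)\|_{z,2},
\]
yielding the matching lower bound.

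The main technical point will be to confirm that the formal double series genuinely represents the duality pairing $h(w)=\langle f(w),g(w)\rangle$ and is analytic in $w$. Once one knows that the Fourier partial sums of $g$ converge to $g(w)$ in $(\mathcal{X}_w^{(2)})^*$-norm uniformly on compact subsets of $\mathbb{S}$, both the analyticity of $h$ and the Fubini interchange reducing $h(z_0)$ to $\langle x,x^*\rangle$ become routine. This uniform convergence of the Fourier series of $g$ is the dual counterpart of the convergence used in identifying $\mathcal{X}_{z_0}^{(2)}$ with $\mathcal{H}^2_\pi/\ker\delta_{z_0}$ and follows from the $c_0$-sequentially structured hypothesis.
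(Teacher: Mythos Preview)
Your proposal is correct and follows essentially the same route as the paper: pick a norming functional $x^*$, build the dual analytic function $g$ from Lemmas~\ref{lem:def_A} and~\ref{lem:def_g}, form the scalar pairing $F(w)=\langle g(w),\Gamma_{z_0}(x)(w)\rangle$, observe $|F|\le 1$ with $F(z_0)=1$, and conclude $F\equiv 1$ by the maximum modulus principle. Your write-up is somewhat more explicit than the paper's---you spell out the easy inequality, the passage to the annulus via $2\pi$-periodicity before invoking the maximum modulus principle, and the convergence issues behind the double Fourier series---but the underlying argument is the same.
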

\begin{proof}
Let $x \in \mathcal{X}_{z_0}^{(2)}$.
Take $x^* \in (\mathcal{X}_{z_0}^{(2)})^* = (\mathcal{X^*})_{z_0, 2}^m$ such that $\|x^*\|_{(\mathcal{X^*})_{z_0, 2}^m} = \|x\|_{\mathcal{X}_{z_0}^{(2)}} = x^*(x) = 1$ and an optimal representation $\vec{x^*}, \vec{y^*}$ of $x^*$, that is, $x^* = x_k^* + y_k^*$ for every $k$ and
\[
1 = \|x^*\|_{(\mathcal{X^*})_{z_0, 2}^m} = (\|\vec{x^*}\|_{\mathfrak{S}_0^*(e^{-z_0})}^2 + \|\vec{y^*}\|_{\mathfrak{S}_1^*(e^{1-z_0})}^2)^{\frac{1}{2}}
\]

Let $Ax^*$ be as in Lemma \ref{lem:def_A} and $g$ be as in Lemma \ref{lem:def_g}. Consider the function $F : \mathbb{S} \rightarrow \mathbb{C}$ given by
\[
F(z) = \langle g(z), \Gamma_{z_0}(x)(z) \rangle
\]

Since the series that defines $F(z)$ converges uniformly on compact subsets of $\mathbb{S}$, $F$ is analytic (see the proof of \cite[Lemma 4.2]{DiscreteFramework}). Also, $F(z_0) = 1$ and for every $z \in \mathbb{S}$
\begin{eqnarray*}
\abs{F(z)} & = & \abs{\langle g(z), \Gamma_{z_0}(x)(z) \rangle} \\
& \leq & \|g(z)\|_{(\mathcal{X}_{z_0}^{(2)})^*} \|\Gamma_{z_0}(x)(z)\|_{\mathcal{X}_{z_0}^{(2)}} \\
& \leq & 1
\end{eqnarray*}
because of Lemma \ref{lem:def_g}. By the Maximum Modulus Principle, $F \equiv 1$. The result follows.
\end{proof}

\textbf{Observation:} For the previous result we only needed reflexivity and strict convexity, not uniform convexity.
\medskip

By Theorem \ref{thm:existence_of_map_spheres}, given different $z, w \in \mathbb{S}$ the interpolation process induces a map from the sphere of $\mathcal{X}_z^{(2)}$ into the sphere of $\mathcal{X}_w^{(2)}$. We will prove that this map is a uniform homeomorphism.

\begin{theorem}\label{thm:optimal_function_is_uniform}
Let $(\mathcal{X}_0, \mathcal{X}_1)$ be a Daher couple and let $z \in \mathbb{S}$. Let $\Gamma_{z} : S_{\mathcal{X}_z^{(2)}} \rightarrow \mathcal{H}_{\pi}^2(\mathbb{S}, \mathcal{X}_0, \mathcal{X}_1)$ be the application that sends $x$ to its optimal function. Then $\Gamma_{z}$ is uniformly continuous.
\end{theorem}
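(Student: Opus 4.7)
The plan is to exploit uniform convexity of the target space $\mathcal{H}_{\pi}^{2}(\mathbb{S}, \mathcal{X}_0, \mathcal{X}_1)$, combined with the fact (encoded in Lemma \ref{lem:isometry_quotient_functions}) that evaluation at $z$ is a quotient map. The strategy is the classical one behind the Mazur map and Daher's original argument: midpoints of optimal lifts lift midpoints, and uniform convexity converts closeness in the sphere of $\mathcal{X}_z^{(2)}$ into closeness of the lifts.

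First I would check that $\mathcal{H}_{\pi}^{2}(\mathbb{S}, \mathcal{X}_0, \mathcal{X}_1)$ is uniformly convex. Indeed, by the isometric isomorphism $f \mapsto \hat{f}_z$, this space is isometric to $\mathfrak{S}_0(e^{-z}) \cap_2 \mathfrak{S}_1(e^{1-z})$, which sits as the closed diagonal subspace of $\mathfrak{S}_0(e^{-z}) \oplus_2 \mathfrak{S}_1(e^{1-z})$. Each factor $\mathfrak{S}_j(a)$ is isometric to $\mathfrak{S}_j$ via $\vec{x} \mapsto (a^k x_k)_k$, hence uniformly convex because $(\mathcal{X}_0,\mathcal{X}_1)$ is a Daher couple. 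Since the $\ell_2$-sum of two uniformly convex Banach spaces is uniformly convex, and closed subspaces inherit uniform convexity, the claim follows. Let $\delta$ denote its modulus of convexity.

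Next, fix $x, y \in S_{\mathcal{X}_z^{(2)}}$. The function $h = \tfrac{1}{2}(\Gamma_z(x) + \Gamma_z(y))$ lies in $\mathcal{H}_{\pi}^{2}(\mathbb{S}, \mathcal{X}_0, \mathcal{X}_1)$ and satisfies $h(z) = \tfrac{1}{2}(x+y)$. By Lemma \ref{lem:isometry_quotient_functions},
\[
\left\| \frac{x+y}{2} \right\|_{z,2} \;\le\; \left\| \frac{\Gamma_z(x) + \Gamma_z(y)}{2} \right\|_{\mathcal{H}_{\pi}^{2}}.
\]
On the other hand, the triangle inequality on the sphere of $\mathcal{X}_z^{(2)}$ yields
\[
\left\| \frac{x+y}{2} \right\|_{z,2} \;\ge\; 1 - \frac{1}{2}\|x - y\|_{z,2}.
\]

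Now I would argue by contraposition for uniform continuity. Given $\varepsilon > 0$, suppose $\|\Gamma_z(x) - \Gamma_z(y)\|_{\mathcal{H}_{\pi}^{2}} \ge \varepsilon$. Since $\Gamma_z(x)$ and $\Gamma_z(y)$ are unit vectors in the uniformly convex space $\mathcal{H}_{\pi}^{2}(\mathbb{S}, \mathcal{X}_0, \mathcal{X}_1)$,
\[
\left\| \frac{\Gamma_z(x) + \Gamma_z(y)}{2} \right\|_{\mathcal{H}_{\pi}^{2}} \;\le\; 1 - \delta(\varepsilon).
\]
Chaining the three displays gives $1 - \tfrac{1}{2}\|x - y\|_{z,2} \le 1 - \delta(\varepsilon)$, i.e.\ $\|x - y\|_{z,2} \ge 2\delta(\varepsilon)$. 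Equivalently, $\|x - y\|_{z,2} < 2\delta(\varepsilon)$ forces $\|\Gamma_z(x) - \Gamma_z(y)\|_{\mathcal{H}_{\pi}^{2}} < \varepsilon$, which is exactly uniform continuity of $\Gamma_z$.

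I do not expect any step here to be a serious obstacle; the only subtle point is verifying the uniform convexity transfer from $\mathfrak{S}_j$ to the intersection space, and this is a routine two-line check once one observes that the scaling isomorphism $\mathfrak{S}_j(a) \cong \mathfrak{S}_j$ is isometric and that the intersection embeds diagonally into the $\ell_2$-sum. The bulk of the work has already been done in setting up the isometric identification with $\mathcal{H}_{\pi}^{2}$; after that, the proof is essentially the observation that optimal lifts are contractive sections of a quotient map into a uniformly convex space.
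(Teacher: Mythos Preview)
Your proposal is correct and follows essentially the same approach as the paper: both use that $\Gamma_z(x),\Gamma_z(y)$ are unit-norm lifts of $x,y$ through the quotient map $\delta_z$, apply the modulus of convexity $\delta$ of $\mathcal{H}_\pi^2$ to their midpoint, and combine with the triangle inequality on $S_{\mathcal{X}_z^{(2)}}$ to obtain $\delta(\|\Gamma_z(x)-\Gamma_z(y)\|)\le \tfrac12\|x-y\|_{z,2}$. The paper takes the uniform convexity of $\mathcal{H}_\pi^2$ for granted and packages the conclusion via the function $\beta(t)=\sup\{u\ge 0:\delta(u)\le t\}$, whereas you spell out why the space is uniformly convex and phrase the conclusion by contraposition; these are cosmetic differences only.
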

\begin{proof}
Let $\delta$ be the modulus of convexity of $\mathcal{H}_{\pi}^2(\mathbb{S}, \mathcal{X}_0, \mathcal{X}_1)$. For $x, y \in S_{\mathcal{X}_z^{(2)}}$ let $f_x = \Gamma_{z}(x)$ and $f_y = \Gamma_{z}(y)$. It follows that
\[
\norm{\frac{f_x + f_y}{2}}_{\mathcal{H}_{\pi}^2(\mathbb{S}, \mathcal{X}_0, \mathcal{X}_1)} \leq 1 - \delta(\|f_x - f_y\|_{\mathcal{H}_{\pi}^2(\mathbb{S}, \mathcal{X}_0, \mathcal{X}_1)})
\]

So
\[
\norm{\frac{x + y}{2}}_{\mathcal{X}_{z_0}^{(2)}} \leq 1 - \delta(\|f_x - f_y\|_{\mathcal{H}_{\pi}^2(\mathbb{S}, \mathcal{X}_0, \mathcal{X}_1)})
\]

Therefore
\begin{eqnarray*}
\norm{\frac{x - y}{2}}_{\mathcal{X}_{z_0}^{(2)}} & \geq & 1 - \norm{\frac{x + y}{2}}_{\mathcal{X}_{z_0}^{(2)}} \\
& \geq & 1 - \Big(1 - \delta(\|f_x - f_y\|_{\mathcal{H}_{\pi}^2(\mathbb{S}, \mathcal{X}_0, \mathcal{X}_1)})\Big) \\
& = & \delta(\|f_x - f_y\|_{\mathcal{H}_{\pi}^2(\mathbb{S}, \mathcal{X}_0, \mathcal{X}_1)})
\end{eqnarray*}

If we let $\beta(t) = \sup\{u \geq 0 : \delta(u) \leq t\}$ then
\[
\|f_x - f_y\|_{\mathcal{H}_{\pi}^2(\mathbb{S}, \mathcal{X}_0, \mathcal{X}_1)} \leq \beta\Big(\norm{x-y}_{\mathcal{X}_{z_0}^{(2)}}/2\Big)
\]

Since $\lim_{t \rightarrow 0} \beta(t) = 0$, it follows that $\Gamma_{z}$ is uniformly continuous.
\end{proof}

\begin{theorem}[Daher's Theorem]
Let $(\mathcal{X}_0, \mathcal{X}_1)$ be Daher a couple and let $z, w \in \mathbb{S}$. Let $\Gamma_{z} : S_{X_{z, 2}} \rightarrow \mathcal{H}_{\pi}^2(\mathbb{S}, \mathcal{X}_0, \mathcal{X}_1)$ be the application that sends $x$ to its optimal function. Then $U_{z, w} : S_{\mathcal{X}_z^{(2)}} \rightarrow S_{\mathcal{X}_w^{(2)}}$ given by $U_{z,w}(x) = \Gamma_{z}(x)(w)$ is a uniform homeomorphism.
\end{theorem}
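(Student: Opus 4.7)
The plan is to assemble the proof from three pieces that have already been essentially established: the norm-preservation identity from Theorem \ref{thm:existence_of_map_spheres}, the uniform continuity of $\Gamma_z$ from Theorem \ref{thm:optimal_function_is_uniform}, and the uniqueness part of Theorem \ref{thm:Daher1}. The key observation is that these together force $U_{w,z}$ to be the two-sided inverse of $U_{z,w}$.

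First, I would check that $U_{z,w}$ maps the sphere into the sphere. This is immediate from Theorem \ref{thm:existence_of_map_spheres}: if $\|x\|_{z,2} = 1$ then
\[
\|U_{z,w}(x)\|_{w,2} = \|\Gamma_z(x)(w)\|_{w,2} = \|x\|_{z,2} = 1.
\]
Second, I would verify uniform continuity of $U_{z,w}$. By Lemma \ref{lem:isometry_quotient_functions} the evaluation $\delta_w : \mathcal{H}^2_\pi(\mathbb{S}, \mathcal{X}_0, \mathcal{X}_1) \to \mathcal{X}_w^{(2)}$ is bounded with norm at most one, so writing $U_{z,w} = \delta_w \circ \Gamma_z$ and invoking Theorem \ref{thm:optimal_function_is_uniform} gives uniform continuity as a composition of a bounded linear map with a uniformly continuous map.

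The crux is bijectivity together with a uniformly continuous inverse, and for this I would argue that $U_{w,z}$ inverts $U_{z,w}$. Fix $x \in S_{\mathcal{X}_z^{(2)}}$ and set $y = U_{z,w}(x) = \Gamma_z(x)(w)$. The function $\Gamma_z(x) \in \mathcal{H}^2_\pi(\mathbb{S}, \mathcal{X}_0, \mathcal{X}_1)$ has norm one and satisfies $\Gamma_z(x)(w) = y$ with $\|y\|_{w,2} = 1$ by the first step. But $\Gamma_w(y)$ is by definition the unique element of $\mathcal{H}^2_\pi$ of norm one with value $y$ at $w$; uniqueness here comes from the strict convexity hypothesis built into a Daher couple (Theorem \ref{thm:Daher1}(2)). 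Hence $\Gamma_w(y) = \Gamma_z(x)$, and evaluating at $z$ yields
\[
U_{w,z}(U_{z,w}(x)) = \Gamma_w(y)(z) = \Gamma_z(x)(z) = x.
\]
By symmetry $U_{z,w} \circ U_{w,z} = \mathrm{id}_{S_{\mathcal{X}_w^{(2)}}}$, so $U_{z,w}$ is a bijection with inverse $U_{w,z}$, and the second step, applied with the roles of $z$ and $w$ swapped, gives uniform continuity of the inverse.

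I do not expect a serious obstacle: the setup of the preceding sections has already absorbed all the analytic work (the Maximum Modulus argument for norm preservation and the uniform convexity argument for continuity of $\Gamma_z$), and the proof reduces to matching these against the uniqueness clause of the optimal function. The only subtlety worth flagging is that the evaluation map $\delta_w$ must genuinely drop in norm to at most one; this is exactly the content of Lemma \ref{lem:isometry_quotient_functions}, so no extra estimate is needed.
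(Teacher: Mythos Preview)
Your proof is correct and follows essentially the same approach as the paper's. The paper's version is terser: it simply asserts that $U_{w,z}$ is the inverse of $U_{z,w}$ and then bounds $\|U_{z,w}(x) - U_{z,w}(y)\|_{\mathcal{X}_w^{(2)}} \leq \|\Gamma_z(x) - \Gamma_z(y)\|_{\mathcal{H}_\pi^2}$ to invoke Theorem \ref{thm:optimal_function_is_uniform}; your write-up adds the explicit justification, via the uniqueness clause of Theorem \ref{thm:Daher1}, for why $\Gamma_w(U_{z,w}(x)) = \Gamma_z(x)$ and hence why $U_{w,z}$ really is the inverse.
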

\begin{proof}
The map $U_{z, w}$ is surjective because its inverse is $U_{w, z}$. Since
\[
\|U_{z,w}(x) - U_{z,w}(y)\|_{\mathcal{X}_w^{(2)}} = \|\Gamma_{z}(x)(w) - \Gamma_{z}(y)(w)\|_{\mathcal{X}_w^{(2)}} \leq \|\Gamma_{z}(x) - \Gamma_{z}(y)\|_{\mathcal{H}_{\pi}^2(\mathbb{S}, \mathcal{X}_0, \mathcal{X}_1)}
\]
it follows from Theorem \ref{thm:optimal_function_is_uniform} that $U_{z,w}$ is uniformly continuous. Similarly, $U_{w,z}$ is uniformly continuous.
\end{proof}

Notice that we are using the same space of functions to define all interpolation spaces. This is not what happens in Daher's formulation of the complex method. Indeed, for every $z \in \mathbb{S}$ there is a function space $\mathcal{F}_{z}$ used to define the interpolation space. This is responsible for the following: in principle one may only bound the modulus of continuity of the maps $U_{z, w}$ for $0 < a < \Real(z), \Real(w) < b < 1$ (see \cite[Proposition 9.13]{GeometricNonlinearBook}). Since in the discrete formulation all interpolation spaces are defined through the same function space, we get:
\begin{proposition}
Let $(\mathcal{X}_0, \mathcal{X}_1)$ be a Daher couple. Then there is a map $\gamma$ satisfying $\lim_{\epsilon \rightarrow 0} \gamma(\epsilon) = 0$ such that the modulus of continuity of $U_{z, w}$ is bounded by $\gamma$ for every $z, w \in \mathbb{S}$.
\end{proposition}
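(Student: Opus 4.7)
The plan is to observe that the proof of Theorem \ref{thm:optimal_function_is_uniform} already produces a modulus of continuity that is independent of $z$, and then to use that the evaluation map $\delta_w$ is always a norm-one quotient to transfer the bound to $U_{z,w}$ uniformly in $w$ as well. The structural input that makes this work is that $\mathcal{H}_\pi^2(\mathbb{S},\mathcal{X}_0,\mathcal{X}_1)$ is a single Banach space, not a family indexed by a base point: by the lemma stating that $(e^{-kz}\hat f_z(k))_{k\in\mathbb{Z}}$ is independent of $z\in\mathbb{S}$, the quantity $\|f\|_{\mathcal{H}_\pi^2}$ is intrinsic to the couple. Hence the modulus of convexity $\delta$ of $\mathcal{H}_\pi^2$, and consequently the function $\beta(t)=\sup\{u\geq 0:\delta(u)\leq t\}$ used in the proof of Theorem \ref{thm:optimal_function_is_uniform}, are absolute quantities attached to $(\mathcal{X}_0,\mathcal{X}_1)$.

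With this in hand, the argument of Theorem \ref{thm:optimal_function_is_uniform} yields, for every $z\in\mathbb{S}$ and all $x,y\in S_{\mathcal{X}_z^{(2)}}$, the estimate
\[
\|\Gamma_z(x)-\Gamma_z(y)\|_{\mathcal{H}_\pi^2}\leq \beta\bigl(\|x-y\|_{\mathcal{X}_z^{(2)}}/2\bigr),
\]
with the same $\beta$ for all $z$. Lemma \ref{lem:isometry_quotient_functions} identifies $\mathcal{X}_w^{(2)}$ as the quotient of $\mathcal{H}_\pi^2$ by $\ker\delta_w$, so $\delta_w$ is norm-decreasing. Since $U_{z,w}=\delta_w\circ\Gamma_z$, I would conclude
\[
\|U_{z,w}(x)-U_{z,w}(y)\|_{\mathcal{X}_w^{(2)}}\leq \|\Gamma_z(x)-\Gamma_z(y)\|_{\mathcal{H}_\pi^2}\leq \beta\bigl(\|x-y\|_{\mathcal{X}_z^{(2)}}/2\bigr),
\]
and the right-hand side does not depend on $z$ or $w$. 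Setting $\gamma(\epsilon)=\beta(\epsilon/2)$ then gives the desired universal modulus, with $\lim_{\epsilon\to 0}\gamma(\epsilon)=0$ because $\beta(t)\to 0$ as $t\to 0$.

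The only point where I would pause is to verify that $\mathcal{H}_\pi^2$ is genuinely uniformly convex, so that $\delta(u)>0$ for $u>0$ and $\beta(t)\to 0$. Fixing any auxiliary $z_0$, the map $f\mapsto \hat f_{z_0}$ realizes $\mathcal{H}_\pi^2$ as a closed subspace of $\mathfrak{S}_0(e^{-z_0})\oplus_2\mathfrak{S}_1(e^{1-z_0})$, and uniform convexity passes from $\mathfrak{S}_0,\mathfrak{S}_1$ (guaranteed by the Daher-couple hypothesis) to their weighted variants, their $\ell_2$-sum, and closed subspaces thereof. This is the only place where uniform — rather than merely strict — convexity is used; the rest of the argument is a formal composition of the uniform estimate for $\Gamma_z$ with the contraction $\delta_w$.
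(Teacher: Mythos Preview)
Your proposal is correct and follows precisely the approach the paper intends: the proposition is stated there as an immediate consequence of the fact that a single function space $\mathcal{H}_\pi^2(\mathbb{S},\mathcal{X}_0,\mathcal{X}_1)$ governs all the interpolation spaces, so the modulus-of-convexity argument of Theorem~\ref{thm:optimal_function_is_uniform} produces a $\beta$ independent of $z$, and composing with the norm-one evaluation $\delta_w$ (from Lemma~\ref{lem:isometry_quotient_functions}) removes any dependence on $w$. Your write-up actually supplies more detail than the paper does, including the verification that $\mathcal{H}_\pi^2$ is uniformly convex via its isometric embedding into $\mathfrak{S}_0(e^{-z_0})\oplus_2\mathfrak{S}_1(e^{1-z_0})$.
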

Of course, the previous result is isometric in nature, and therefore depends on the particular representation of the discrete framework of interpolation.

We will call an interpolation method a \emph{Daher method} if for every regular compatible couple $(X_0, X_1)$ of uniformly convex spaces the associated sequentially structured couple $(\mathcal{X}_0, \mathcal{X}_1)$ is a Daher couple. We now give some examples.

Through the rest of this section we let $(X_0, X_1)$ be a regular compatible couple of uniformly convex Banach spaces.

\subsection{Real method} Let $p_0, p_1 \in (1, \infty)$ and consider the sequence structures $\mathfrak{S}_j = \ell^{p_j}(\mathbb{Z}, X_j)$. It is clear that this gives us a Daher method. We have that $(\mathcal{X}_0, \mathcal{X}_1)_z$ is the interpolation space $(X_0, X_1)_{\Real(z), p_0, p_1}$ given by the Lions-Peetre mean method of \cite{Lions_Peetre_64} with equivalence of norms. In turn, that is equal to the real interpolation space $(X_0, X_1)_{\Real(z), p}$, where $\frac{1}{p} = \frac{1-\Real(z)}{p_0} + \frac{\Real(z)}{p_1}$.

\subsection{Complex method} Since we are dealing with reflexive spaces, the lower and upper complex methods agree, so we only need to describe the lower method. Let $p_0, p_1 \in (1, \infty)$ and consider the sequence structures $\mathfrak{S}_j = \hat{L}^{p_j}(\mathbb{T}, X_j)$ of Fourier coefficients $\hat{f} = (\hat{f}(k))_{k \in \mathbb{Z}}$ of functions in $L^{p_j}(\mathbb{T}, X_j)$, with $\|\hat{f}\|_{\hat{L}^{p_j}(\mathbb{T}, X_j)} = (2\pi)^{\frac{1}{p}} \|f\|_{L^{p_j}(\mathbb{T}, X_j)}$. Again, we have a Daher method. We have $(\mathcal{X}_0, \mathcal{X}_1)_{z} = (X_0, X_1)_{\Real(z)}$, the complex interpolation space of Calderón \cite{Calderon1964}, with equivalence of norms. We therefore recover Daher's theorem.

\subsection{Rademacher and $\gamma$ methods} We now describe the Rademacher method of \cite{H_infty_calculus}. Fix $p \in (1, \infty)$ and let $(\varepsilon_k)_{k \in \mathbb{Z}}$ be a sequence of independent Rademacher random variables on a probability space $(\Omega, P)$. Consider the sequence structures $\mathfrak{S}_j = \epsilon^p(\mathbb{Z}, X_j)$ of all $\vec{x} \in \ell^0(\mathbb{Z}, X_j)$ given by
\[
\|\vec{x}\|_{\epsilon^p(\mathbb{Z}, X_j)} = \|\sum\limits_{k \in \mathbb{Z}} \varepsilon_k x_k\|_{L^p(\Omega, X_j)} < \infty
\]
The interpolation space $(\mathcal{X}_0, \mathcal{X}_1)_z$ is the Rademacher interpolation space $(X_0, X_1)_{z, \epsilon}$. The Rademacher method is Daher.

The $\gamma$ interpolation method is defined as the Rademacher method, but instead of Rademacher variables we take Gaussian ones. Since the Rademacher and $\gamma$ methods agree for spaces of finite cotype \cite{Euclidian_structures}, Daher's theorem is also valid for the $\gamma$ method (see the comments after Definition 10.17 and Proposition 10.39 of \cite{Pisier_Martingales}).

\subsection{$\alpha$-method} The $\alpha$-method is defined in \cite{Euclidian_structures} through the notion of Euclidean structures. Whether the $\alpha$-method is Daher or not depends on the Euclidean structure being considered. For example, the Gaussian Euclidean structure gives the $\gamma$-method, and therefore Daher's Theorem applies. However, if we take the operator norm Euclidean structure on $\ell_2$ uniform convexity is lost.

\medskip

As mentioned in the introduction, all the methods above satisfy a reiteration theorem with the complex method. More precisely, if $(X_0, X_1)$ is a compatible couple of Banach spaces let us denote by $[X_0, X_1]_{\theta}$ its complex interpolation space at $\theta \in (0, 1)$. According to \cite[Example 6.6]{DiscreteFramework}, if $(\mathcal{X}_0, \mathcal{X}_1)$ is a compatible couple of $c_0$-sequentially structured Banach spaces such that there is a constant $C > 0$ for which $\norm{(e^{iks}x_k)}_{\mathfrak{S}_j} \leq C \norm{\vec{x}}_{\mathfrak{S}_j}$ for every $\vec{x} \in \mathfrak{S}_j$, $j = 0, 1$ and $s \in \mathbb{R}$, then we have
\[
[(\mathcal{X}_0, \mathcal{X}_1)_{\theta_0}, (\mathcal{X}_0, \mathcal{X}_1)_{\theta_1}]_{\theta} = (\mathcal{X}_0, \mathcal{X}_1)_{w}
\]
with equivalence of norms for every $0 < \theta_0 < \theta_1 < 1$ and $\theta \in (0, 1)$ with $w = (1 - \theta) \theta_0 + \theta \theta_1$. As such, we could have already obtained uniform homeomorphisms between the unit spheres of interpolation spaces generated by such methods by passing through complex interpolation. The following is a possible example where reiteration is not available.

\subsection{A James' interpolation method} Given a Banach space $X$ consider a vector valued James space $J(\mathbb{Z}, X)$ which is the completion of $c_{00}(X)$ with respect to the norm
\[
\|\vec{x}\|_{J(\mathbb{Z}, X)} \coloneqq \frac{1}{\sqrt{2}} \sup \Big(\|x_{p_1} - x_{p_2}\|_X^2 + \|x_{p_2} - x_{p_3}\|_X^2 + \cdots + \|x_{p_n} - x_{p_{n-1}}\|_X^2 \Big)^{\frac{1}{2}}
\]
where the supremum is over all finite sequences of integers $p_1 < p_2 < \cdots < p_n$.

Let $J_2(\mathbb{Z}, X) \coloneqq [J(\mathbb{Z}, X), \ell_2(\mathbb{Z}, X)]_{\frac{1}{2}}$. Then $J_2(\mathbb{Z}, X)$ is a reflection invariant $c_0$-sequence structure on $X$. This may be proved used the properties of complex interpolation and the comments at the beginning of \cite[Section 2]{DiscreteFramework}. Furthermore, if $X$ is uniformly convex then so is $J_2(\mathbb{Z}, X)$ and therefore we may use it to define a Daher method. Let us show that there is no $C > 0$ such that $\norm{(e^{iks}x_k)}_{J_2(\mathbb{Z}, X)} \leq C \norm{\vec{x}}_{J_2(\mathbb{Z}, X)}$ for every $\vec{x} \in J_2(\mathbb{Z}, X)$ and $s \in \mathbb{R}$.

Fix $x \in X$ and $x^* \in X^*$ both of norm $1$ such that $x^*(x)=1$. For each $n \geq 1$ we will consider the following vectors: $\vec{x^n} = \sum\limits_{j=1}^{2n} e_j \otimes x$, $\vec{x_s^n} = \sum\limits_{j=1}^{2n} e^{ijs} e_j \otimes x$ and $\vec{z^n} = \sum\limits_{j=1}^{2n} (-1)^j e_j \otimes x^*$.

It is easy to check that $\|\vec{x^n}\|_{J(\mathbb{Z}, X)} = 1$, and therefore $\|\vec{x^n}\|_{J_2(\mathbb{Z}, X)} \leq  \sqrt{2} n^{\frac{1}{4}}$. On the other hand, $\|\vec{z^n}\|_{J(\mathbb{Z}, X)^*} \leq \sqrt{2} \sqrt{n}$, since given $\vec{y} \in J(\mathbb{Z}, X)$ of norm 1, we have:
\begin{eqnarray*}
    \abs{\vec{z^n} (\vec{y})} & = & \abs{\sum\limits_{j=1}^{2n} (-1)^j x^*(y_j)} \leq \sum\limits_{j=1}^{n} \norm{y_{2j} - y_{2j-1}}_X \leq \sqrt{2} \sqrt{n}
\end{eqnarray*}
If $X$ is reflexive then $J_2(\mathbb{Z}, X)^* = [J(\mathbb{Z}, X)^*, \ell_2(\mathbb{Z}, X)]_{\frac{1}{2}}$ isometrically. It follows that $\|\vec{z^n}\|_{J_2(\mathbb{Z}, X)^*} \leq \sqrt{2} \sqrt{n}$ and
\begin{eqnarray*}
    \|\vec{x_s^{n}}\|_{J_2(\mathbb{Z}, X)} & \geq & \frac{1}{\sqrt{2}\sqrt{n}} \abs{\vec{z^n} \vec{x_s^n}} = \frac{1}{\sqrt{2}\sqrt{n}} \left|\frac{e^{2nis}-1}{e^{is} + 1}\right|
\end{eqnarray*}
If we let $s \rightarrow \pi$ the right side approaches $\sqrt{2} \sqrt{n}$. 
Thus, there is no $C > 0$ such that $\|\vec{x_s^n}\|_{J_2(\mathbb{Z}, X)} \leq C \|\vec{x^n}\|_{J_2(\mathbb{Z}, X)}$ for every $s \in \mathbb{R}$ and every $n \geq 1$.

\section{Final remarks}

Daher's original statement of his Theorem asks only that one of the spaces $X_0$ or $X_1$ is uniformly convex. This is a simple consequence of the reiteration theorem for complex interpolation and the fact that it is enough that one of the spaces $X_0$ or $X_1$ be uniformly convex for the complex interpolation space $X_{\theta}$ to be uniformly convex. This last fact is proved through the interpolation formula $
\|x\|_{\theta} \leq \|f(i \cdot)\|_{L_p(\nu_0)}^{1 - \theta} \|f(1 + i \cdot)\|_{L_p(\nu_1)}^{\theta}
$, where $f$ is a function in the Calderón space such that $f(\theta) = x$. There is a similar formula for the discrete framework \cite[Lemma 3.8]{DiscreteFramework}, but with a multiplicative constant which prevents a direct adaptation of the proof. It is interesting to note that if $X_0$ is uniformly convex then the real interpolation space $X_{\theta, p}$ ($1 < p < \infty$) has an equivalent norm which is uniformly convex \cite[page 71]{Beauzamy}, and therefore Daher's Theorem holds for the real method asking only that $X_0$ is uniformly convex.


Daher's Theorem tells us something about how the spheres of the interpolation spaces are transformed by the interpolation method. Closely related to that is the behaviour of this transformation with respect to the Kadets metric. Recall that the Kadets distance $d_K(X, Y)$ between $X$ and $Y$ is the infimum of the Hausdorff distance between the closed unit balls $B_X$ and $B_Y$ when we consider all possible linear isometric embeddings of $X$ and $Y$ in any Banach space $Z$.

\begin{lemma}\label{lem:derivative}
Let $(\mathcal{X}_0, \mathcal{X}_1)$ be a compatible couple of sequentially structured Banach spaces and $s \in \mathbb{S}$. There is $C > 0$ such that whenever $f \in \ker \delta_s$ we have $\|g\|_{\mathcal{H}_{\pi}^2(\mathbb{S}, \mathcal{X}_0, \mathcal{X}_1)} \leq C \|f\|_{\mathcal{H}_{\pi}^2(\mathbb{S}, \mathcal{X}_0, \mathcal{X}_1)}$, where $g(z) = \frac{f(z)}{e^z - e^s}$.
\end{lemma}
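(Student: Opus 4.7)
The plan is to pass to Fourier coefficients: writing $\tilde{x}_k = e^{-kz_0}\hat{f}_{z_0}(k)$ (which is independent of $z_0$), we have $f(z) = \sum_k e^{kz}\tilde{x}_k$ and a direct computation shows
\[
\|f\|_{\mathcal{H}_\pi^2}^2 \;=\; \|\tilde{x}\|_{\mathfrak{S}_0}^2 + \|\tilde{x}\|_{\mathfrak{S}_1(e)}^2.
\]
The hypothesis $f(s) = 0$ then translates to $\sum_k e^{ks}\tilde{x}_k = 0$. I would first verify that $g$ really belongs to $\mathcal{H}_\pi(\mathbb{S}, X_0 + X_1)$: the denominator $e^z - e^s$ has simple zeros exactly at the points $s + 2\pi i k$, and by $2\pi$-periodicity $f$ vanishes at each of them, so $g$ extends analytically; periodicity of $g$ follows from the $2\pi$-periodicity of both $f$ and $e^z - e^s$.

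Writing $g(z) = \sum_k e^{kz}\tilde{y}_k$, the identity $(e^z - e^s)g(z) = f(z)$ is equivalent to the recurrence $\tilde{x}_k = \tilde{y}_{k-1} - e^s\tilde{y}_k$. Iterating it in the two directions and using $\sum_k e^{ks}\tilde{x}_k = 0$ to check the answers coincide, I obtain
\[
\tilde{y}_k \;=\; -\sum_{n=0}^\infty e^{-(n+1)s}\,\tilde{x}_{k-n} \;=\; \sum_{n=1}^\infty e^{(n-1)s}\,\tilde{x}_{k+n}.
\]

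To bound $\|\tilde{y}\|_{\mathfrak{S}_0}$, I would use the first (past) formula: it presents $\tilde{y}$ as the convolution of $\tilde{x}$ with a scalar $\ell^1$-sequence of norm $\sum_{n\geq 0}e^{-(n+1)\Real(s)} = (e^{\Real(s)}-1)^{-1}$, and translation invariance of $\mathfrak{S}_0$ yields $\|\tilde{y}\|_{\mathfrak{S}_0} \leq (e^{\Real(s)}-1)^{-1}\|\tilde{x}\|_{\mathfrak{S}_0}$. For $\|\tilde{y}\|_{\mathfrak{S}_1(e)}$, I would rewrite the second (future) formula in terms of $w_k = e^k\tilde{x}_k$ and $v_k = e^k\tilde{y}_k$ to get $v_k = \sum_{n\geq 1} e^{(n-1)(s-1)-1}w_{k+n}$; this kernel is $\ell^1$-summable precisely because $\Real(s) < 1$, with norm $(e - e^{\Real(s)})^{-1}$, so translation invariance of $\mathfrak{S}_1$ gives $\|\tilde{y}\|_{\mathfrak{S}_1(e)} \leq (e - e^{\Real(s)})^{-1}\|\tilde{x}\|_{\mathfrak{S}_1(e)}$. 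Adding the two squared estimates then produces the lemma with $C = \max\bigl((e^{\Real(s)}-1)^{-1},\,(e-e^{\Real(s)})^{-1}\bigr)$.

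The main obstacle is the choice of expansion: neither of the two solutions to the recurrence yields an $\ell^1$-summable kernel on both weighted spaces simultaneously, because $\Real(s)\in(0,1)$ makes only one of $e^{-s}$ and $e^{s-1}$ contractive in each direction. The vanishing condition $f(s) = 0$ is precisely what forces both geometric-series expansions to define the same sequence $\tilde{y}$, so one is free to pair the past formula with $\mathfrak{S}_0$ and the future formula with $\mathfrak{S}_1(e)$.
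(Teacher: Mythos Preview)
Your argument is correct and is essentially the same as the paper's proof: both pass to Fourier coefficients, solve the recurrence $\tilde{x}_k = \tilde{y}_{k-1} - e^s\tilde{y}_k$ as a one-sided geometric sum in each direction, use $f(s)=0$ to identify the two solutions, and then bound $\|\tilde{y}\|_{\mathfrak{S}_0}$ with the past expansion and $\|\tilde{y}\|_{\mathfrak{S}_1(e)}$ with the future expansion via translation invariance of the sequence structures. Your presentation is in fact slightly tidier, since you absorb the base point into $\tilde{x}_k$ (the paper keeps an auxiliary $t\in\mathbb{S}$) and you make explicit both the analyticity/periodicity of $g$ and the value $C=\max\bigl((e^{\Real(s)}-1)^{-1},(e-e^{\Real(s)})^{-1}\bigr)$.
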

\begin{proof}
Take $t \in \mathbb{S} \setminus \{s\}$ and write $f(z) = \sum\limits_{k \in \mathbb{Z}} e^{k(z-t)} y_k$. One may check that if $\Real(s) < \Real(z)$ then
\begin{eqnarray*}
g(z) & = & e^{-t} \sum\limits_{m \in \mathbb{Z}} e^{m(z-t)} \sum\limits_{n \geq 0} e^{-n(t-s)} y_{m + n + 1}
\end{eqnarray*}
and therefore
\[
\hat{g}_t(k) = e^{-t} \sum\limits_{n \geq 0} e^{-n(t-s)} y_{k+n+1}
\]
Since $f(s) = 0$, we have $\sum\limits_{k \in \mathbb{Z}} e^{k(s-t)} y_k = 0$. Therefore:
\[
e^{-kt} \hat{g}_t(k) = -\sum\limits_{n < 0} e^{ns} e^{-(n+k+1) t} y_{k+n+1}
\]
and $(e^{-kt} \hat{g}_t(k))_k = -\sum\limits_{n < 0} e^{ns} S^n ((e^{-kt} y_k)_k)$, where $S$ is the shift operator $S((x_n)_{n\in\mathbb{Z}}) = (x_{n+1})_{n\in\mathbb{Z}}$. Also,
\[
e^{k(1-t)} \hat{g}_t(k) = e^{-1} \sum\limits_{n \geq 0} e^{n(s-1)} e^{(k+n+1)(1-t)} y_{k+n+1}
\]
and therefore $(e^{k(1-t)} \hat{g}_t(k)) = e^{-1} \sum\limits_{n \geq 0} e^{n(s-1)} S^n((e^{k(1-t)} y_k)_k)$. Since $\|S\| = 1$ in any sequence structure, the result follows.
\end{proof}

Let us call the constant $C$ of the previous lemma $C_s$. It is clear that $\sup_{a < \Real(s) < b} C_s < \infty$ for every $0 < a < b < 1$.

\begin{lemma}
Let $(\mathcal{X}_0, \mathcal{X}_1)$ be a compatible couple of sequentially structured Banach spaces and $s, t \in \mathbb{S}$. For any $f \in \ker \delta_s$ there is $h \in \ker \delta_t$ such that $\|f - h\|_{\mathcal{H}_{\pi}^2(\mathbb{S}, \mathcal{X}_0, \mathcal{X}_1)} \leq \abs{e^t - e^s} C_s \|f\|_{\mathcal{H}_{\pi}^2(\mathbb{S}, \mathcal{X}_0, \mathcal{X}_1)}$. In particular, if $\|f\|_{\mathcal{H}_{\pi}^2(\mathbb{S}, \mathcal{X}_0, \mathcal{X}_1)} = 1$ there is $h \in \ker \delta_t$ of norm $1$ such that $\|f - h\|_{\mathcal{H}_{\pi}^2(\mathbb{S}, \mathcal{X}_0, \mathcal{X}_1)} \leq 2 \abs{e^t - e^s} C_s$.
\end{lemma}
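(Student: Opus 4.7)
The plan is to use Lemma \ref{lem:derivative} to pull the factor $(e^z - e^s)$ out of $f$ and graft on the corresponding factor $(e^z - e^t)$ in its place. Since $f(s) = 0$, Lemma \ref{lem:derivative} produces $g \in \mathcal{H}_{\pi}^2(\mathbb{S}, \mathcal{X}_0, \mathcal{X}_1)$ satisfying $f(z) = (e^z - e^s) g(z)$ with $\|g\| \leq C_s \|f\|$. I would then define
$$h(z) := f(z) - (e^t - e^s) g(z) = (e^z - e^t) g(z).$$
The first expression shows that $h \in \mathcal{H}_{\pi}^2(\mathbb{S}, \mathcal{X}_0, \mathcal{X}_1)$ as a difference of two elements of that space, and the second expression makes $h(t) = 0$ transparent, so $h \in \ker \delta_t$.

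The first bound is then immediate:
$$\|f - h\|_{\mathcal{H}_{\pi}^2(\mathbb{S}, \mathcal{X}_0, \mathcal{X}_1)} = |e^t - e^s|\,\|g\|_{\mathcal{H}_{\pi}^2(\mathbb{S}, \mathcal{X}_0, \mathcal{X}_1)} \leq |e^t - e^s|\,C_s\,\|f\|_{\mathcal{H}_{\pi}^2(\mathbb{S}, \mathcal{X}_0, \mathcal{X}_1)}.$$

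For the ``in particular'' part with $\|f\| = 1$, the element $h$ produced above need not have norm $1$, so I would normalize it. If $|e^t - e^s|C_s \geq 1$, the claimed bound $2|e^t - e^s|C_s$ is at least $2$ and any norm-one element of $\ker \delta_t$ works. Otherwise $\|h\| \geq 1 - |e^t - e^s|C_s > 0$ by the reverse triangle inequality, and setting $\tilde h := h/\|h\|$, I would estimate
$$\|f - \tilde h\| \leq \|f - h\| + \bigl|\,\|h\| - 1\,\bigr| \leq 2|e^t - e^s|C_s,$$
where the last step uses $\bigl|\|h\| - 1\bigr| = \bigl|\|h\| - \|f\|\bigr| \leq \|f - h\|$. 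No step here looks genuinely hard; the only subtleties are the degenerate case just handled and the routine normalization — all the real work is absorbed into the preceding Lemma \ref{lem:derivative}.
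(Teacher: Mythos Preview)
Your argument is correct and is essentially the paper's own proof: define $h(z)=(e^z-e^t)g(z)$ with $g$ from Lemma \ref{lem:derivative}, read off the first estimate, then normalise $h$ and use the reverse triangle inequality for the second. The only difference is cosmetic---you split off the case $|e^t-e^s|C_s\ge 1$, whereas the paper normalises directly (and in fact $h\ne 0$ whenever $f\ne 0$, since $g\ne 0$ and $(e^z-e^t)g\equiv 0$ would force $g\equiv 0$, so no case split is really needed).
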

\begin{proof}
Let $f \in \ker \delta_s$ and take $g \in _{\mathcal{H}_{\pi}^2(\mathbb{S}, \mathcal{X}_0, \mathcal{X}_1)}$ such that $f = (e^z - e^s) g$. Consider the function $(e^z - e^t) g \in \ker \delta_t$. Then $\|f - (e^z - e^t)g\|_{\mathcal{H}_{\pi}^2(\mathbb{S}, \mathcal{X}_0, \mathcal{X}_1)} \leq \abs{e^t - e^s} C_s \|f\|_{\mathcal{H}_{\pi}^2(\mathbb{S}, \mathcal{X}_0, \mathcal{X}_1)}$.

Suppose that $\|f\| = 1$. We have
\begin{eqnarray*}
\|f - \frac{(e^z - e^t)g}{\|(e^z - e^t)g\|}\| & \leq & \abs{e^t - e^s} C_s + \|(e^z - e^t)g - \frac{(e^z - e^t)g}{\|(e^z - e^t)g\|}\| \\
& = & \abs{e^t - e^s} C_s + \abs{\|(e^z - e^s)g\| - \|(e^z - e^t)g\|} \\
& \leq & \abs{e^t - e^s} C_s + \|(e^t - e^s) g\| \\
& \leq & 2 \abs{e^t - e^s} C_s
\end{eqnarray*}
\end{proof}

Now from \cite[Theorem 4.1]{DistancesBanach} we get:
\begin{corollary}
Let $(\mathcal{X}_0, \mathcal{X}_1)$ be a compatible couple of sequentially structured Banach spaces and $s, t \in \mathbb{S}$. We have $d_K(\mathcal{X}_{t}^{(2)}, \mathcal{X}_{s}^{(2)}) \leq 4 \abs{e^t - e^s} \max\{C_s, C_t\}$.
\end{corollary}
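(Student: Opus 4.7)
The plan is to exploit the common representation of both interpolation spaces as quotients of the single Banach space $\mathcal{H}_{\pi}^2(\mathbb{S},\mathcal{X}_0,\mathcal{X}_1)$. By Lemma~\ref{lem:isometry_quotient_functions} we have isometries
\[
\mathcal{X}_{s}^{(2)} = \mathcal{H}_{\pi}^2(\mathbb{S},\mathcal{X}_0,\mathcal{X}_1)/\ker\delta_{s}, \qquad \mathcal{X}_{t}^{(2)} = \mathcal{H}_{\pi}^2(\mathbb{S},\mathcal{X}_0,\mathcal{X}_1)/\ker\delta_{t},
\]
so the Kadets distance between $\mathcal{X}_{s}^{(2)}$ and $\mathcal{X}_{t}^{(2)}$ is controlled by the geometric closeness of the two kernels inside the ambient space $\mathcal{H}_{\pi}^2(\mathbb{S},\mathcal{X}_0,\mathcal{X}_1)$. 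This is exactly the setting of \cite[Theorem~4.1]{DistancesBanach}, which bounds the Kadets distance between two such quotients by (twice) the Hausdorff distance between the unit spheres of the kernels when these are viewed inside a common ambient space.

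The first step is therefore to estimate this Hausdorff distance. This is what the previous lemma accomplishes: given any unit vector $f\in\ker\delta_{s}$, it produces $h\in\ker\delta_{t}$ of norm $1$ with
\[
\|f-h\|_{\mathcal{H}_{\pi}^2(\mathbb{S},\mathcal{X}_0,\mathcal{X}_1)} \leq 2\abs{e^t-e^s}\, C_s,
\]
and by symmetry (exchanging the roles of $s$ and $t$ and applying the lemma with constant $C_t$), every unit vector of $\ker\delta_{t}$ is similarly approximated by a unit vector of $\ker\delta_{s}$. Taking the worst of the two sides gives a Hausdorff distance between the unit spheres of $\ker\delta_{s}$ and $\ker\delta_{t}$ of at most $2\abs{e^t-e^s}\max\{C_s,C_t\}$.

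Plugging this estimate into \cite[Theorem~4.1]{DistancesBanach} yields
\[
d_K(\mathcal{X}_{t}^{(2)},\mathcal{X}_{s}^{(2)}) \leq 2\cdot 2\abs{e^t-e^s}\max\{C_s,C_t\} = 4\abs{e^t-e^s}\max\{C_s,C_t\},
\]
which is the claimed bound. The only subtle point is that \cite[Theorem~4.1]{DistancesBanach} is stated for quotients of a common Banach space, so one must verify that the hypothesis matches our setting where both kernels live inside $\mathcal{H}_{\pi}^2(\mathbb{S},\mathcal{X}_0,\mathcal{X}_1)$ with the induced norm. Since the quotient isometries furnished by Lemma~\ref{lem:isometry_quotient_functions} do exactly that, this verification is immediate, and the corollary follows.
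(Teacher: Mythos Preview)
Your proposal is correct and follows exactly the paper's approach: the paper's proof is the single line ``Now from \cite[Theorem~4.1]{DistancesBanach} we get,'' and you have simply unpacked what that citation entails, using the preceding lemma to bound the Hausdorff distance between the unit spheres of $\ker\delta_s$ and $\ker\delta_t$ and then invoking the quotient isometries of Lemma~\ref{lem:isometry_quotient_functions}.
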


It follows that methods given by the discrete framework satisfy a series of stability results with respect to the Kadets distance. For example, if $X_w^{(2)}$ is either reflexive, super-reflexive or separable for some $w \in \mathbb{S}$ then $X_z^{(2)}$ is so for every $z \in \mathbb{S}$. See \cite{DistancesBanach} for more details.

Also, Lemma \ref{lem:derivative}, which is based on \cite[Lemma 3.11]{CKMR}, is the basis for a theory of commutator estimates, which is the theme of the first open question of \cite{DiscreteFramework}.

\bibliographystyle{amsplain}
\bibliography{refs}

\providecommand{\bysame}{\leavevmode\hbox to3em{\hrulefill}\thinspace}
\providecommand{\MR}{\relax\ifhmode\unskip\space\fi MR }
\providecommand{\MRhref}[2]{%
  \href{http://www.ams.org/mathscinet-getitem?mr=#1}{#2}
}
\providecommand{\href}[2]{#2}
\begin{thebibliography}{10}

\bibitem{HolderDaher}
A.~Andoni, A.~Naor, A.~Nikolov, I.~Razenshteyn, and E.~Waingarten,
  \emph{Hölder homeomorphisms and approximate nearest neighbors}, 2018 IEEE
  59th Annual Symposium on Foundations of Computer Science (FOCS), 2018,
  pp.~159--169.

\bibitem{Beauzamy}
B.~Beauzamy, \emph{Espaces d'interpolation reels, topologie et geometrie},
  Lecture Notes in Mathematics, Springer, 1978.

\bibitem{GeometricNonlinearBook}
Y.~Benyamini and J.~Lindenstrauss, \emph{Geometric nonlinear functional
  analysis}, American Mathematical Society colloquium publications, no. pt. 1,
  American Mathematical Soc., 1998.

\bibitem{BerghLofstrom}
J.~Bergh and J.~L{\"o}fstr{\"o}m, \emph{Interpolation spaces: An introduction},
  Grundlehren der mathematischen Wissenschaften, Springer Berlin Heidelberg,
  2012.

\bibitem{Calderon1964}
A.~Calderón, \emph{Intermediate spaces and interpolation, the complex method},
  Studia Mathematica \textbf{24} (1964), no.~2, 113--190 (eng).

\bibitem{Chaatit}
F.~Chaatit, \emph{On uniform homeomorphisms of the unit spheres of certain
  banach lattices}, Pacific J. Math. \textbf{168} (1995), 11--31.

\bibitem{ComplexInterpolationFamilies}
R.~Coifman, M.~Cwikel, R.~Rochberg, Y.~Sagher, and G.~Weiss, \emph{Complex
  interpolation for families of {B}anach spaces}, Proceedings, Symposia in Pure
  Mathematics (Providence, R. I.), vol. 35, part 2, Amer. Math. Soc, 1979,
  pp.~269--282.

\bibitem{CKMR}
M.~Cwikel, N.~Kalton, M.~Milman, and R.~Rochberg, \emph{A unified theory of
  commutator estimates for a class of interpolation methods}, Advances in
  Mathematics \textbf{169} (2002), 241--312.

\bibitem{Daher}
M.~Daher, \emph{Homéomorphismes uniformes entre les sphères unité des
  espaces d'interpolation}, Canadian Mathematical Bulletin \textbf{38} (1995),
  no.~3, 286–294.

\bibitem{Ivtsan}
A.~Ivtsan, \emph{Stafney's lemma holds for several "classical" interpolation
  methods}, Proceedings of the American Mathematical Society \textbf{140}
  (2012), no.~3, 881--889.

\bibitem{KaltonKothe}
N.~Kalton, \emph{Differentials of complex interpolation processes for
  {K}{\"o}the function spaces}, Transactions of the American Mathematical
  Society \textbf{333} (1992), 479--529.

\bibitem{H_infty_calculus}
N.~Kalton, P.~Kunstmann, and L.~Weis, \emph{Perturbation and interpolation
  theorems for the {$H^{\infty}$}-calculus with applications to differential
  operators}, Mathematische Annalen \textbf{336} (2006), 747--801.

\bibitem{Euclidian_structures}
N.~Kalton, E.~Lorist, and L.~Weis, \emph{Euclidian structures and operator
  theory in {B}anach spaces}, To appear in Mem. Amer. Math. Soc. (2022).

\bibitem{DistancesBanach}
N.~Kalton and M.~Ostrovskii, \emph{Distances between {B}anach spaces}, Forum
  Mathematicum \textbf{11} (1997).

\bibitem{DiscreteFramework}
N.~{Lindemulder} and E.~{Lorist}, \emph{{A discrete framework for the
  interpolation of Banach spaces}}, arXiv e-prints (2021), arXiv:2105.08373.

\bibitem{Lions_Peetre_64}
J.-L. Lions and J.~Peetre, \emph{Sur une classe d'espaces d'interpolation},
  Publications Math\'ematiques de l'IH\'ES \textbf{19} (1964), 5--68 (fr).

\bibitem{Mazur1929}
S.~Mazur, \emph{Une remarque sur l'homéomorphie des champs fonctionnels},
  Studia Mathematica \textbf{1} (1929), no.~1, 83--85 (fre).

\bibitem{OdellSchlumprecht}
E.~Odell and T.~Schlumprecht, \emph{{The distortion problem}}, Acta Mathematica
  \textbf{173} (1994), no.~2, 259 -- 281.

\bibitem{PisierApplications}
G.~Pisier, \emph{Some applications of the complex interpolation method to
  {B}anach lattices}, Journal d’Analyse Mathématique \textbf{35} (1979),
  264--281.

\bibitem{Pisier_Martingales}
\bysame, \emph{Martingales in {B}anach spaces}, Cambridge Studies in Advanced
  Mathematics, Cambridge University Press, 2016.

\end{thebibliography}

\end{document}